\providecommand{\U}[1]{\protect\rule{.1in}{.1in}}
\newtheorem{theorem}{Theorem}
\newtheorem{definition}[theorem]{Definition}
\newtheorem{example}[theorem]{Example}
\newtheorem{lemma}[theorem]{Lemma}
\newtheorem{remark}[theorem]{Remark}
\newenvironment{proof}[1][Proof]{\noindent\textbf{#1.} }{\ \rule{0.5em}{0.5em}}
\begin{document}

\title{Periodic analogues of Dedekind sums and transformation formulas of Eisenstein series}
\author{M. Cihat Da\u{g}l\i\thanks{Corresponding author}\ \ and M\"{u}m\"{u}n Can\\Department of Mathematics, Akdeniz University, \\07058-Antalya, Turkey\\e-mails: mcihatdagli@akdeniz.edu.tr, mcan@akdeniz.edu.tr}
\date{}
\maketitle

\begin{abstract}
In this paper, a transformation formula under modular substitutions is derived
for a large class of generalized Eisenstein series. Appearing in the
transformation formulae are generalizations of Dedekind sums involving the
periodic Bernoulli function. Reciprocity theorems are proved for these
Dedekind sums. Furthermore, as an application of the transformation formulae,
relations between various infinite series and evaluations of several infinite
series are deduced. Finally, we consider these sums for some special cases.

\textbf{Keywords: }Eisenstein series, Dedekind sums, Bernoulli numbers and polynomials.

\textbf{MSC 2010: }11M36, 11F20, 11B68.

\end{abstract}

\section{Introduction}

For integers $c$ and $d$ with $c>0$, the classical Dedekind sum $s(d,c)$ is
defined by%
\[
s(d,c)=\sum\limits_{n(\operatorname{mod}c)}\left(  \left(  \frac{n}{c}\right)
\right)  \left(  \left(  \frac{dn}{c}\right)  \right)
\]
where the \textit{sawtooth} function is defined by%
\[
\left(  \left(  x\right)  \right)  =%
\begin{cases}
x-\left[  x\right]  -1/2, & \text{if }x\in\mathbb{R}\backslash\mathbb{Z}%
\text{;}\\
0, & \text{if }x\in\mathbb{Z}\text{,}%
\end{cases}
\]
with $[x]$\ the floor function. One of the most important properties of
Dedekind sums is the reciprocity formula\textbf{\ }%
\begin{equation}
s(d,c)+s(c,d)=-\frac{1}{4}+\frac{1}{12}\left(  \frac{d}{c}+\frac{c}{d}%
+\frac{1}{dc}\right) \label{35}%
\end{equation}
whenever $c$ and $d$ are coprime positive integers. For several proofs of
(\ref{35}) and generalizations for Dedekind sums, for example, see
\cite{14,12,2,11,7,15,20,5,18,ham,lim,9,13,sek,16}.

Dedekind sums first arose in the transformation formulas of $\log\eta(z),$
where $\eta(z)$ denotes the Dedekind eta-function. There are several other
functions such as Eisenstein series, which possess transformation formula
similar to $\log\eta(z)$. Lewittes \cite{6} has discovered a method of
obtaining transformation formulas for certain generalized Eisenstein series.

In \cite{12}, Berndt gave a different account of the final part of Lewittes'
proof. His new proof yielded elegant transformation formulas in which Dedekind
sums or various generalizations of Dedekind sums appear. The results of
\cite{12} have been generalized in \cite{11}. Berndt \cite{22} derived a
number of transformation formulas from the general theorem in \cite{11}.
Arising in the transformation formulae are various types of Dedekind sums, all
of which satisfy reciprocity theorems.

In \cite{2} and \cite{7}, Berndt considered a more general class of Eisenstein
series and developed transformation formulae for a wide class of functions
involving characters including the natural character generalizations of
$\log\eta(z)$. In these formulas further generalizations of Dedekind sums
appear. These sums involve characters and generalized Bernoulli functions, and
possess reciprocity laws proved via transformation formulae.

Also in \cite{8} and \cite{22}, Berndt has used the transformation formulas to
evaluate several classes of infinite series and establish many relations
between various infinite series.

The aim of this paper is to obtain a transformation formula for a very large
class of Eisenstein series defined by
\begin{equation}
G(z,s;A_{\alpha},B_{\beta};r_{1},r_{2})=\sum\limits_{m,n=-\infty}^{\infty
}\ \hspace{-0.19in}^{^{\prime}}\frac{f(\alpha m)f^{\ast}(\beta n)}%
{((m+r_{1})z+n+r_{2})^{s}},\text{ }\operatorname{Re}(s)>2,\text{
}\operatorname{Im}(z)>0\label{53}%
\end{equation}
where $\left\{  f(n)\right\}  $ and $\left\{  f^{\ast}(n)\right\}  ,$
$-\infty<n<\infty$ are sequences of complex numbers with period $k>0,$ and
$A_{\alpha}=\left\{  f(\alpha n)\right\}  $ and $B_{\beta}=\left\{  f^{\ast
}(\beta n)\right\}  .$ In (\ref{53}), the dash $\prime$ means that the
possible pair $m=-r_{1},n=-r_{2}$ is excluded from the summation.
Generalizations of Dedekind sums involving the periodic Bernoulli function
appear in the transformation formulae. It is shown that these Dedekind sums
obey reciprocity theorems. Moreover, transformation formulas contain many
other interesting results as special cases. These results give the values of
several interesting infinite series and yield relations between various
infinite series, some of which generalize some results of \cite{8, 22} and
also found in Ramanujan's Notebooks; such as%
\begin{equation}
\sum\limits_{n=1}^{\infty}\frac{\chi\left(  n\right)  }{n\left(  e^{n\gamma
/2}+e^{-n\gamma/2}\right)  }+\sum\limits_{n=1}^{\infty}\frac{\chi\left(
n\right)  }{n\left(  e^{n\theta/2}+e^{-n\theta/2}\right)  }=\frac{\pi}%
{8},\nonumber
\end{equation}
which gives Entry 25 (vii) on p. 295 of Berndt \cite{17} for $\theta
=\gamma=\pi$ (see Example \ref{ex1}). Finally, we consider these Dedekind sums
for some special values of $A_{\alpha}$ and $B_{\beta}.$

We summarize this study as follows: Section 2 is a preliminary section
containing the basic definitions and notations we need. In Section 3, we show
that the function $G(z,s;A_{\alpha},B_{\beta};r_{1},r_{2})$ can be
analytically continued to the entire $s$--plane with the possible exception of
a simple pole at $s=1$. Section 4 deals with a transformation formula for the
function $G(z,s;A,B;r_{1},r_{2})$. In Section 5, we first investigate the
transformation formula for the case $s=r_{1}=r_{2}=0,$ in which a
generalization of Dedekind sum, called periodic Dedekind sums, appears.
Setting $s=0$ and $r_{1},$ $r_{2}$ arbitrary real numbers in the
transformation formula, new generalization of Dedekind sum arises, as well.
Moreover, we prove the reciprocity theorems for these Dedekind sums. In
Section 6, concerning with some special cases of transformation formulae
allows us to present several relations between various infinite series. In
Section 7, the periodic Dedekind sums is illustrated for some special values
of $A_{\alpha}$ and $B_{\beta}.$

The definition of the Eisenstein series in (\ref{53}) and the methods
presented in the sequel are motivated by \cite{2} and \cite{7}.

\section{Preliminaries}

In this section, we give a brief summary for the material only needed in the
subsequent sections. Throughout this study we use the modular transformation
$Vz=V\left(  z\right)  =\left(  az+b\right)  /\left(  cz+d\right)  $ where
$a,$ $b, $ $c$ and $d$ are integers with $ad-bc=1$ and $c>0$. We use the
notation $\left\{  x\right\}  $ for the fractional part of $x,$ and
$\lambda_{x}$ for the characteristic function of integers. The upper
half-plane $\left\{  x+iy:y>0\right\}  $ will be denoted by $\mathbb{H}$ and
the upper quarter--plane $\{x+iy:x>-d/c$, $y>0\}$ by $\mathbb{K}$. We put
$e(z)=e^{2\pi iz}$ and unless otherwise stated, we use the branch of the
argument defined by $-\pi\leq$ arg $z<\pi$.

Let $\left\{  f(n)\right\}  =A,$ $-\infty<n<\infty$ be sequence of complex
numbers with period $k>0.$ For $\left\vert t\right\vert <2\pi/k,$ the periodic
Bernoulli numbers and polynomials are defined by means of the generating
functions \cite{10}%
\begin{equation}
\sum\limits_{n=0}^{k-1}\frac{tf(n)e^{nt}}{e^{kt}-1}=\sum\limits_{j=0}^{\infty
}\frac{B_{j}(A)}{j!}t^{j}\label{b}%
\end{equation}
and
\begin{equation}
\sum\limits_{n=0}^{k-1}\frac{tf(-n)e^{(n+x)t}}{e^{kt}-1}=\sum\limits_{j=0}%
^{\infty}\frac{B_{j}(x,A)}{j!}t^{j}.\label{bp}%
\end{equation}
Note that, when $A=I=\left\{  1\right\}  ,$(\ref{b}) and (\ref{bp}) reduce to
ordinary Bernoulli numbers and polynomials, defined by the generating
functions \cite{1}%
\begin{align}
\frac{t}{e^{t}-1}  &  =\sum\limits_{n=0}^{\infty}B_{n}\frac{t^{n}}%
{n!},\ |t|<2\pi,\nonumber\\
\frac{te^{xt}}{e^{t}-1}  &  =\sum\limits_{n=0}^{\infty}B_{n}(x)\frac{t^{n}%
}{n!},\ |t|<2\pi,\label{8}%
\end{align}
respectively. Notice that\textbf{\ }$B_{0}(x)=1,$ $B_{1}=-1/2,$ $B_{1}%
(1)=1/2,$ $B_{2n+1}=B_{2n-1}\left(  1/2\right)  =0,$\ $n\geq1,$\ and
$B_{1}(x)=x-1/2.$

Throughout this paper, the $n-$th Bernoulli function will be denoted by
$P_{n}\left(  x\right)  $ and is defined by
\[
n!P_{n}\left(  x\right)  =B_{n}\left(  x-\left[  x\right]  \right)  .
\]
In particular $P_{1}\left(  x\right)  =x-\left[  x\right]  -1/2.$ These
functions satisfy\textbf{\ }the Raabe or multiplication formula for all real
$x$
\begin{equation}
P_{n}(x)=r^{n-1}\sum\limits_{m=0}^{r-1}P_{n}\left(  \frac{m+x}{r}\right)
\label{26}%
\end{equation}
and the reflection identity $P_{n}(-x)=\left(  -1\right)  ^{n}P_{n}(x)$ except
$n=1$ and $x\in\mathbb{Z}$, in that case
\[
P_{1}(-x)=P_{1}(x)=P_{1}(0)=-1/2.
\]

The periodic Bernoulli functions $P_{n}(x,A),$ are functions with period $k$,
may be defined by \cite{10}
\begin{equation}
P_{0}(x,A)=B_{0}(A)=\frac{1}{k}\sum\limits_{m=0}^{k-1}f(m)\label{10}%
\end{equation}
and%
\begin{equation}
P_{n}(x,A)=k^{n-1}\sum\limits_{m=0}^{k-1}f(-m)P_{n}\left(  \frac{m+x}%
{k}\right)  ,\text{ }n\geq1\label{10-1}%
\end{equation}
for all real $x$.

Define the sequence $\widehat{A}=\left\{  \widehat{f}(n)\right\}  $ by
\begin{equation}
\widehat{f}(n)=\frac{1}{k}\sum\limits_{j=0}^{k-1}f\left(  j\right)  e\left(
-nj/k\right) \label{47}%
\end{equation}
for $-n<\infty<n.$ These are the finite Fourier series coefficients of
$\left\{  f(n)\right\}  .$ Clearly $\widehat{A}$ also has period $k.$ Note
that (\ref{47}) holds if and only if%
\begin{equation}
f(n)=\sum\limits_{j=0}^{k-1}\widehat{f}\left(  j\right)  e\left(  nj/k\right)
,\text{ \ }-n<\infty<n.\text{\ }\label{50}%
\end{equation}

\section{The function $G(z,s;A_{\alpha},B_{\beta};r_{1},r_{2})$}

Let $A_{\alpha}$ denote the sequence $\left\{  f\left(  \alpha n\right)
\right\}  ,$ $-\infty<n<\infty,$ i.e., $\left\{  f\left(  \alpha n\right)
\right\}  =A_{\alpha},$ $\alpha\in\mathbb{Z}$.\ Similarly $\left\{  f^{\ast
}\left(  \alpha n\right)  \right\}  =B_{\alpha},$ $-\infty<n<\infty.$ We begin
with a study of the function%
\[
G(z,s;A_{\alpha},B_{\beta};r_{1},r_{2})=\sum\limits_{m,n=-\infty}^{\infty
}\frac{f(\alpha m)f^{\ast}(\beta n)}{((m+r_{1})z+n+r_{2})^{s}}.
\]
From the definition, we see that%
\begin{align}
&  G(z,s;A_{\alpha},B_{\beta};r_{1},r_{2})\nonumber\\
&  =\lambda_{r_{1}}f(-\alpha r_{1})\sum\limits_{n=-\infty}^{\infty}f^{\ast
}(\beta n)(n+r_{2})^{-s}+\left(  \sum\limits_{m<-r_{1}}\sum\limits_{n=-\infty
}^{\infty}+\sum\limits_{m>-r_{1}}\sum\limits_{n=-\infty}^{\infty}\right)
\frac{f(\alpha m)f^{\ast}(\beta n)}{((m+r_{1})z+n+r_{2})^{s}}\nonumber\\
&  =S_{1}+S_{2}+S_{3}.\label{1}%
\end{align}
Firstly, we write $S_{1}$ as
\begin{align}
S_{1} &  =\lambda_{r_{1}}f(-\alpha r_{1})\left(  \sum\limits_{n>-r_{2}}%
f^{\ast}(\beta n)(n+r_{2})^{-s}+\sum\limits_{n>r_{2}}f^{\ast}(-\beta
n)(-n+r_{2})^{-s}\right) \nonumber\\
&  =\lambda_{r_{1}}f(-\alpha r_{1})\left(  L(s,B_{\beta};r_{2}%
)+e(s/2)L(s,B_{-\beta};-r_{2})\right) \label{2}%
\end{align}
where
\begin{equation}
L(s;A_{\beta};\theta)=\sum\limits_{n>-\theta}f(n\beta)(n+\theta)^{-s},\text{
for }\operatorname{Re}\left(  s\right)  >1\text{ and }\theta\text{
real.}\label{12}%
\end{equation}
$L(s;A_{\beta};\theta)$ can be written in terms of Hurwitz zeta function
$\zeta\left(  s,\theta\right)  $ as follows: Setting $n=mk+j+\left[
-\theta\right]  +1,$ $0\leq j\leq k-1,$ $0\leq m<\infty$ and using the fact
$\left[  \theta\right]  +\left[  -\theta\right]  =\lambda_{\theta}-1,$\ it is
seen that for $\operatorname{Re}(s)>1$%

\begin{align}
L(s;A_{\beta};\theta) &  =\sum\limits_{j=0}^{k-1}\sum\limits_{m=0}^{\infty
}\frac{f(\beta\left(  j+\left[  -\theta\right]  +1\right)  )}{\left(
km+j+\left[  -\theta\right]  +1+\theta\right)  ^{s}}\nonumber\\
&  =\frac{1}{k^{s}}\sum\limits_{j=1}^{k}f(\beta\left(  j-\left[
\theta\right]  +\lambda_{\theta}\right)  )\zeta\left(  s,\frac{j+\left\{
\theta\right\}  +\lambda_{\theta}}{k}\right)  .\label{Ls}%
\end{align}
Since the periodic zeta-function $\zeta\left(  s,A\right)  $ has an analytic
continuation into the entire $s$--plane where it is holomorphic with the
possible exception of a simple pole at $s=1$ where the residue is
$\widehat{f}\left(  0\right)  =B_{0}\left(  A\right)  $ (see \cite[Corollary
6.5]{10}), $L(s;A_{\beta};\theta)$ can be analytically continued to the entire
$s$--plane except for $s=1.$

Secondly, if we replace $m$ by $-m$ and $n$ by $-n$ in $S_{2},$ then%
\begin{align*}
S_{2}  &  =\sum\limits_{m<-r_{1}}f(\alpha m)\sum\limits_{n=-\infty}^{\infty
}\frac{f^{\ast}(\beta n)}{((m+r_{1})z+n+r_{2})^{s}}\\
&  =e(s/2)\sum\limits_{m>r_{1}}f(-\alpha m)\sum\limits_{n=-\infty}^{\infty
}\frac{f^{\ast}(-\beta n)}{((m-r_{1})z+n-r_{2})^{s}}.
\end{align*}
Using (\ref{50}) and the Lipschitz summation formula given by
\[
\sum\limits_{n=1}^{\infty}(n-\tau)^{s-1}e^{2\pi iz(n-\tau)}=\frac{\Gamma
(s)}{\left(  -2\pi i\right)  ^{s}}\sum\limits_{n=-\infty}^{\infty}%
(z+n)^{-s}e^{2\pi in\tau}%
\]
where $\operatorname{Re}(s)>1,$ $z\in\mathbb{H}$ and $0\leq\tau<1,$ we obtain
\begin{align*}
&  \Gamma(s)\sum\limits_{n=-\infty}^{\infty}\frac{f^{\ast}(-\beta
n)}{(w+n)^{s}}=\sum\limits_{j=0}^{k-1}\frac{f^{\ast}(-\beta j)}{k^{s}}%
\Gamma(s)\sum\limits_{n=-\infty}^{\infty}\frac{1}{(n+\frac{j+w}{k})^{s}}\\
&  \quad=\left(  -2\pi i/k\right)  ^{s}\sum\limits_{n=1}^{\infty}%
n^{s-1}e\left(  \frac{nw}{k}\right)  \sum\limits_{j=0}^{k-1}f^{\ast}(-\beta
j)e\left(  \frac{n\beta^{-1}\beta j}{k}\right) \\
&  \quad=\left(  -2\pi i/k\right)  ^{s}\sum\limits_{n=1}^{\infty}%
n^{s-1}e\left(  \frac{nw}{k}\right)  \sum\limits_{j=0}^{k-1}f^{\ast
}(j)e\left(  -n\beta^{-1}\frac{j}{k}\right) \\
&  \quad=k\left(  -2\pi i/k\right)  ^{s}\sum\limits_{n=1}^{\infty
}\widehat{f^{\ast}}(\beta^{-1}n)n^{s-1}e\left(  \frac{nw}{k}\right)
\end{align*}
for $\operatorname{Re}(s)>1$ , $\operatorname{Im}(w)>0$ and $\beta\beta
^{-1}\equiv1\left(  \operatorname{mod}k\right)  $. Thus, we have
\begin{equation}
S_{2}=\frac{\left(  -2\pi i/k\right)  ^{s}}{\Gamma(s)}ke(s/2)A\left(
z,s;A_{-\alpha},\widehat{B}_{\beta^{-1}};-r_{1}{\LARGE ,}-r_{2}\right)
\label{3}%
\end{equation}
where%
\[
A(z,s;A_{\alpha},A_{\beta};r_{1}{\huge ,}r_{2})=\sum\limits_{m>-r_{1}}f(\alpha
m)\sum\limits_{n=1}^{\infty}f(\beta n)e\left(  n\frac{(m+r_{1})z+r_{2}}%
{k}\right)  n^{s-1}.
\]

Similarly we deduce that
\begin{equation}
S_{3}=\frac{\left(  -2\pi i/k\right)  ^{s}}{\Gamma(s)}kA\left(  z,s;A_{\alpha
},\widehat{B}_{-\beta^{-1}};r_{1}{\huge ,}r_{2}\right)  .\label{4}%
\end{equation}

Combining (\ref{1}), (\ref{2}), (\ref{3}) and (\ref{4}), we conclude that%
\begin{align}
&  G(z,s;A_{\alpha},B_{\beta};r_{1},r_{2})\nonumber\\
&  =\frac{\left(  -2\pi i/k\right)  ^{s}k}{\Gamma(s)}\left(  A\left(
z,s;A_{\alpha},\widehat{B}_{-\beta^{-1}};r_{1}{\huge ,}r_{2}\right)
+e(s/2)A\left(  z,s;A_{-\alpha},\widehat{B}_{\beta^{-1}};-r_{1}{\LARGE ,}%
-r_{2}\right)  \right) \nonumber\\
&  \quad+\lambda_{r_{1}}f(-\alpha r_{1})\left(  L(s;B_{\beta};r_{2}%
)+e(s/2)L(s;B_{-\beta};-r_{2})\right)  .\label{11}%
\end{align}
Since $L(s;B_{\beta};r_{2})$ can be analytically continued to the entire
$s$--plane with the possible exception $s=1$ and since $A\left(
z,s;A_{\alpha},B_{\beta};r_{1}{\huge ,}r_{2}\right)  $ is entire function of
$s,$ $G(z,s;A_{\alpha},B_{\beta};r_{1},r_{2})$ can be analytically continued
to the entire $s$--plane with the possible exception $s=1$.

For simplicity, the function $G(z,s;A_{\alpha},B_{\beta};0,0)$\ will be
denoted by $G(z,s;A_{\alpha},B_{\beta})$ and $G(z,s;A_{1},$ $B_{1};r_{1}%
,r_{2})=G\left(  z,s;A,B;r_{1},r_{2}\right)  $.

\section{Transformation Formulas}

In this section, we present transformation formulas for the function $G\left(
z,s;A_{\alpha},B_{\beta};r_{1},r_{2}\right)  .$ We need the following lemma.

\begin{lemma}
(\cite{6})\label{le1} Let $E,$ $F,$ $C$ and $D$ be real with $E$ and $F$ not
both zero and $C>0$. Then for $z\in\mathbb{H},$%
\[
arg\left(  \left(  Ez+F\right)  /\left(  Cz+D\right)  \right)  =arg\left(
Ez+F\right)  -arg\left(  Cz+D\right)  +2\pi l,
\]
where $l$ is independent of $z$ and $l=\left\{
\begin{array}
[c]{ll}%
1, & E\leq0\text{ and }DE-CF>0,\\
0, & \text{otherwise.}%
\end{array}
\right.  $
\end{lemma}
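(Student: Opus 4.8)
The plan is to reduce the statement to a computation about arguments of complex numbers lying in half-planes, using the fact that $\arg$ is continuous and well-defined away from the negative real axis. First I would fix $z = x+iy \in \mathbb{H}$ and set $w_1 = Ez+F$ and $w_2 = Cz+D$. Since $C>0$ and $y>0$, the point $w_2$ lies strictly in the upper half-plane, so $0 < \arg(w_2) < \pi$ and $w_2 \neq 0$; hence the quotient $w_1/w_2$ is a well-defined nonzero complex number (note $w_1 \neq 0$ as well, since $E,F$ are not both zero and $z\notin\mathbb{R}$ unless $E=0$, in which case $w_1=F\neq 0$). The identity $\arg(w_1/w_2) = \arg(w_1) - \arg(w_2) + 2\pi l$ must hold for some integer $l$ simply because both sides represent the same point on the circle $\mathbb{R}/2\pi\mathbb{Z}$; the entire content of the lemma is the determination of $l$ and the assertion that it does not depend on $z$.

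The key step is to track in which "sheet" the sum $\arg(w_1) - \arg(w_2)$ lands. With the branch $-\pi \le \arg < \pi$, we have $\arg(w_1) \in [-\pi,\pi)$ and, as noted, $\arg(w_2) \in (0,\pi)$, so the raw difference $\arg(w_1)-\arg(w_2)$ lies in the open interval $(-2\pi, \pi)$. Therefore $l \in \{0,1\}$, and $l=1$ precisely when $\arg(w_1)-\arg(w_2) < -\pi$, i.e. when $\arg(w_1/w_2) = \arg(w_1)-\arg(w_2)+2\pi \in [-\pi,\pi)$ forces the $+2\pi$ correction. I would then analyze when $\arg(w_1)-\arg(w_2) < -\pi$ can occur: this requires $\arg(w_1)$ to be close to $-\pi$ (so $w_1$ in the third quadrant or on the negative real axis region, i.e. $\operatorname{Re}(w_1) \le 0$ and $\operatorname{Im}(w_1) \le 0$) while $\arg(w_2)$ is close to $\pi$. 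Computing imaginary parts, $\operatorname{Im}(w_1) = Ey$ and $\operatorname{Im}(w_2) = Cy$; since $y>0$ and $C>0$, $\operatorname{Im}(w_2)>0$ always, and $\operatorname{Im}(w_1) \le 0 \iff E \le 0$. So a necessary condition for $l=1$ is $E\le 0$. Under $E \le 0$, one has $w_1$ in the closed lower half-plane; the quotient $w_1/w_2$ then has $\operatorname{Im}(w_1/w_2) = \operatorname{Im}(w_1\overline{w_2})/|w_2|^2 = (Ey\cdot(\text{something}) - \ldots)/|w_2|^2$, and a direct expansion gives $\operatorname{Im}(w_1\overline{w_2}) = (ED - CF)\,y$. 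Hence $\operatorname{Im}(w_1/w_2)$ has the sign of $(DE-CF)$ (times the positive quantity $y/|w_2|^2$).

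Assembling this: when $E \le 0$ and $DE - CF > 0$, the quotient $w_1/w_2$ lies in the open upper half-plane, so $\arg(w_1/w_2) \in (0,\pi)$; but $\arg(w_1) \in [-\pi, 0]$ (as $\operatorname{Im}(w_1)=Ey\le 0$) and $\arg(w_2)\in(0,\pi)$ give $\arg(w_1)-\arg(w_2) \in (-2\pi, 0)$, which is negative, forcing $l=1$ to land in $(0,\pi)$. In every other case one checks $l=0$: if $E>0$ then $\arg(w_1)\in(0,\pi)$ so $\arg(w_1)-\arg(w_2)\in(-\pi,\pi)$ already, giving $l=0$; and if $E\le 0$ but $DE-CF\le 0$, then $w_1/w_2$ lies in the closed lower half-plane while $\arg(w_1)-\arg(w_2)$, being in $(-2\pi,0)$, would need $l=1$ only if it dropped below $-\pi$ — but then the corrected value would be in $(0,\pi)$, contradicting $\operatorname{Im}(w_1/w_2)\le 0$; so $l=0$ (the degenerate boundary subcases where $w_1/w_2$ is real negative need a small separate check, handled by the half-open branch convention). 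The main obstacle is the careful bookkeeping of the boundary cases — $w_1$ on the negative real axis, $DE-CF=0$, $E=0$ — where the strict/non-strict inequalities in the branch cut $-\pi \le \arg < \pi$ must be invoked with care; everything else is a short sign computation. Finally, since the resulting value of $l$ depends only on the signs of $E$ and of $DE-CF$, not on $z$, the claimed $z$-independence is immediate. $\blacksquare$
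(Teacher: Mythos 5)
Your proof is correct. The paper offers no proof of this lemma, simply citing Lewittes \cite{6}, so there is nothing to compare against; your argument — first pinning $l\in\{0,1\}$ from the ranges $\arg(Ez+F)\in[-\pi,\pi)$ and $\arg(Cz+D)\in(0,\pi)$, reducing $l=1$ to the condition $\arg(Ez+F)-\arg(Cz+D)<-\pi$, and then deciding this via the signs of $\operatorname{Im}(Ez+F)=Ey$ and $\operatorname{Im}\bigl((Ez+F)\overline{(Cz+D)}\bigr)=(DE-CF)y$ — is the standard elementary verification, and it treats the boundary cases ($E=0$, $DE-CF=0$, $Ez+F$ on the cut) consistently with the branch $-\pi\le\arg<\pi$, with the $z$-independence of $l$ falling out because the criterion involves only the signs of $E$ and $DE-CF$.
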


\begin{theorem}
\label{mainteo} Define $R_{1}=ar_{1}+cr_{2}${\huge \ }and $R_{2}=br_{1}%
+dr_{2},$ in which $r_{1}$ and $r_{2}$ are arbitrary real numbers. Let
$\rho=\rho\left(  R_{1},R_{2},c,d\right)  =\left\{  R_{2}\right\}  c-\left\{
R_{1}\right\}  d.$ Suppose first that $a\equiv d\equiv0\left(
\operatorname{mod}k\right)  .$ Then for $z\in\mathbb{K}$ and all $s,$
\begin{align}
&  (cz+d)^{-s}\Gamma(s)G(Vz,s;A,B;r_{1},r_{2})\label{13-a}\\
&  =\Gamma(s)G(z,s;B_{-b},A_{-c};R_{1},R_{2})-2i\Gamma(s)\sin(\pi
s)L(s;A_{c};-R_{2})f^{\ast}(bR_{1})\lambda_{R_{1}}\nonumber\\
&  \quad+e(-s/2)\sum\limits_{j=1}^{c}\sum\limits_{\mu=0}^{k-1}\sum
\limits_{v=0}^{k-1}f(c(\left[  R_{2}+d(j-\left\{  R_{1}\right\}  )/c\right]
-v))f^{\ast}(b(\mu c+j+\left[  R_{1}\right]  )) I(z,s,c,d,r_{1},r_{2}%
)\nonumber
\end{align}
where $L(s;A_{c};R_{2})$ is given by (\ref{12}) and
\begin{equation}
I(z,s,c,d,r_{1},r_{2}) =\int\limits_{C}u^{s-1}\frac{\exp(-\left(
(c\mu+j-\left\{  R_{1}\right\}  )/ck\right)  (cz+d)ku)}{\exp(-ku(cz+d))-1}%
\frac{\exp((\left(  v+\left\{  (dj+\rho)/c\right\}  \right)  /k)ku)}%
{\exp(ku)-1}du.\label{int}%
\end{equation}
Here, we choose the branch of $u^{s}$ with $0<\arg u<2\pi$. Also, $C$ is a
loop beginning at $+\infty$, proceeding in the upper half-plane, encircling
the origin in the positive direction so that $u=0$ is the only zero of
$(\exp(-ku(cz+d))-1)(\exp\left(  ku\right)  -1)$ lying "inside" the loop, and
then returning to $+\infty$ in the lower half-plane. Secondly, if $b\equiv
c\equiv0\left(  \operatorname{mod}k\right)  ,$ then for $z\in\mathbb{K}$ and
all $s,$%
\begin{align}
&  (cz+d)^{-s}\Gamma(s)G(Vz,s;A,B;r_{1},r_{2})\label{13}\\
&  =\Gamma(s)G(z,s;A_{d},B_{a};R_{1},R_{2})-2i\Gamma(s)\sin(\pi s)L(s;B_{-a}%
;-R_{2})f(-dR_{1})\lambda_{R_{1}}\nonumber\\
&  \quad+e(-s/2)\sum\limits_{j=1}^{c}\sum\limits_{\mu=0}^{k-1}\sum
\limits_{v=0}^{k-1}f^{\ast}(-a(\left[  R_{2}+d(j-\left\{  R_{1}\right\}
)/c\right]  -v+d\mu))f(-d(j+\left[  R_{1}\right]  )) I(z,s,c,d,r_{1}%
,r_{2}).\nonumber
\end{align}

\end{theorem}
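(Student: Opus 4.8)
The plan is to follow the Lewittes–Berndt strategy: start from the analytic continuation formula (\ref{11}) applied at the point $Vz$, rewrite each constituent piece in terms of quantities at $z$, and recognize the resulting expressions as Hurwitz-type zeta integrals that can be packaged into the loop integral $I$. Concretely, I would first substitute $Vz=(az+b)/(cz+d)$ into the defining sum for $G(Vz,s;A,B;r_1,r_2)$ and observe that the denominator $((m+r_1)Vz+n+r_2)^s$ becomes $(cz+d)^{-s}((m+r_1)(az+b)+(n+r_2)(cz+d))^s = (cz+d)^{-s}((am+bn+R_1')z + \dots)^s$ after collecting terms; the unimodularity $ad-bc=1$ is what makes the change of summation variables $(m,n)\mapsto(m',n')$ with $m'=am+cn$, $n'=bm+dn$ (or the appropriate inverse) a bijection of $\mathbb{Z}^2$, and it sends $r_1,r_2$ to $R_1=ar_1+cr_2$, $R_2=br_1+dr_2$. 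This already explains the appearance of $G(z,s;\dots;R_1,R_2)$ on the right and the index twists $A\mapsto B_{-b}$, $B\mapsto A_{-c}$ (resp. $A_d,B_a$): the periods $\alpha,\beta$ get multiplied by the relevant matrix entries because $f(\alpha m)$ with $m = $ (linear combo of $m',n'$) reduces mod $k$, and here the hypothesis $a\equiv d\equiv 0$ (resp. $b\equiv c\equiv 0$) mod $k$ is precisely what kills the cross terms so that $f$ depends on a single new variable.

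The core of the argument is the term-by-term evaluation. After the change of variables one is left with a sum over a cone in $\mathbb{Z}^2$; I would split off the boundary contributions (the $\lambda_{R_1}$ terms, analogous to $S_1$ in (\ref{1})), which produce the $L(s;A_c;-R_2)f^\ast(bR_1)\lambda_{R_1}$ term, and then handle the interior using the integral representation
\[
\Gamma(s)\,w^{-s} \;=\; \frac{1}{e(-s/2)-e(s/2)}\int_C u^{s-1} e^{-wu}\,du \;=\; \frac{e(s/2)}{-2i\sin(\pi s)}\int_C u^{s-1}e^{-wu}\,du
\]
valid for $\operatorname{Re}(w)>0$ with the branch $0<\arg u<2\pi$, exactly as in Berndt's treatments. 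Summing the geometric-type series in the two lattice directions turns $\sum e^{-wu}$ over the cone into the product $\dfrac{\exp(-(\cdots)ku(cz+d))}{\exp(-ku(cz+d))-1}\cdot\dfrac{\exp((\cdots)ku)}{\exp(ku)-1}$ that appears inside $I(z,s,c,d,r_1,r_2)$; the constant $c$ in the upper bound $j=1,\dots,c$ and the fractional parts $\{R_1\}$, $\{(dj+\rho)/c\}$ with $\rho=\{R_2\}c-\{R_1\}d$ arise from the arithmetic of writing a lattice point in the cone as $ck\mu + (\text{residue})$, i.e. from sorting the summation index mod $ck$ and mod $c$ respectively — this is where the identity $[\theta]+[-\theta]=\lambda_\theta-1$ used after (\ref{12}) gets invoked again. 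The triple finite sum over $j,\mu,v$ with the coefficients $f(c([R_2+d(j-\{R_1\})/c]-v))f^\ast(b(\mu c+j+[R_1]))$ then records exactly which residue class of the original periodic sequences each lattice point falls into.

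Two technical points need care. First, the argument/branch bookkeeping: when $z\in\mathbb{K}$ one has $\operatorname{Im}(Vz)>0$ and $\operatorname{Re}(cz+d)$ of a definite sign, and the powers $((m+r_1)z+\cdots)^{-s}$, $(Vz\cdot\text{stuff})^{-s}$ and $(cz+d)^{-s}$ must be reconciled using Lemma \ref{le1}; the $2\pi l$ there is what produces the factors $e(\pm s/2)$ and the sign in $-2i\Gamma(s)\sin(\pi s)$, and getting the case split in $l$ to match the cone decomposition is the fussiest part. Second, everything is first derived for $\operatorname{Re}(s)>2$ where all series converge absolutely, and then one appeals to the analytic continuation established in Section 3 (both sides are meromorphic in $s$ with poles only at $s=1$, and $I$ is entire in $s$) to extend the identity to all $s$ — so the equality "for all $s$" is by the identity theorem. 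The second formula (\ref{13}) is obtained by the identical computation with the roles of $(a,b)$ and $(c,d)$ interchanged in the change of variables, i.e. using the other pairing of lattice directions; I would either redo the short computation or note the symmetry explicitly.

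The main obstacle I anticipate is not any single deep idea but the simultaneous control of (i) the branch-of-logarithm/argument corrections via Lemma \ref{le1}, and (ii) the precise residue-class combinatorics that determine the arguments of $f$ and $f^\ast$ in the triple sum and the exact fractional parts inside $I$; these two must be tracked in lockstep, and a sign or a shift by $1$ in either place breaks the final formula.
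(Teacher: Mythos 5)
Your proposal follows essentially the same route as the paper: the unimodular change of summation variables sending $(r_1,r_2)$ to $(R_1,R_2)$ with the congruence hypotheses producing the index twists, Lemma \ref{le1} to split off a cone whose boundary gives the $\lambda_{R_1}L(s;A_c;-R_2)$ term, geometric summation after sorting indices modulo $c$ and $ck$ to build the product of exponential kernels, conversion to the loop integral $I$, and analytic continuation from $\operatorname{Re}(s)>2$; the only deviation is cosmetic, namely that the paper first uses the real Euler integral for $\Gamma(s)$ and converts to the Hankel loop at the very end, whereas you invoke the loop representation directly (and your stated constant $1/(e(-s/2)-e(s/2))$ is off by a factor of $-e(-s/2)$ from the correct $1/(e(s)-1)$, a slip of exactly the kind you flag as needing care). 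This does not affect the soundness of the overall strategy.
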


\begin{remark}
Observe that the transformation formulas obtained by Berndt \cite{12,2,7} are
the special cases of Theorem \ref{mainteo}.
\end{remark}

\begin{proof}
For $z\in\mathbb{H}$ and $\operatorname{Re}\left(  s\right)  >2,$
\[
G(Vz,s;A,B;r_{1},r_{2})=\sum\limits_{m,n=-\infty}^{\infty}f(m)f^{\ast
}(n)\left\{  \frac{((M+R_{1})z+N+R_{2})}{cz+d}\right\}  ^{-s}%
\]
where $M=ma+nc,$ $N=mb+nd$. $M$ and $N$ range over all pairs of integers
except for the possibility $-R_{1},-R_{2}$ as the pair $m,$ $n$ ranges over
all pairs of integers, except for possibly the pair $-r_{1},-r_{2}$ (since
$ad-bc=1$). Thus,
\begin{align*}
&  G(Vz,s;A,B;r_{1},r_{2})\\
&  =\sum\limits_{M,N=-\infty}^{\infty}f(Md-Nc)f^{\ast}(Na-Mb)\left\{
\frac{((M+R_{1})z+N+R_{2})}{cz+d}\right\}  ^{-s}\\
&  =\sum\limits_{m,n=-\infty}^{\infty}f(-nc)f^{\ast}(-mb)\left\{
\frac{((m+R_{1})z+n+R_{2})}{cz+d}\right\}  ^{-s},\text{ }a\equiv
d\equiv0(\operatorname{mod}k),\\
&  =\sum\limits_{m,n=-\infty}^{\infty}f(dm)f^{\ast}(an)\left\{  \frac
{((m+R_{1})z+n+R_{2})}{cz+d}\right\}  ^{-s},\text{ }b\equiv c\equiv
0(\operatorname{mod}k).
\end{align*}
Using Lemma \ref{le1}, we obtain that%
\begin{align}
(cz+d)^{-s}G(Vz,s;A,B;r_{1},r_{2}) &  =\left(  e(-s)\sum
\limits_{\overset{m+R_{1}\leq0}{d(m+R_{1})>c(n+}R_{2})}+\sum
\limits_{\substack{m,n \\\text{otherwise}}}\right)  \frac{f^{\ast}%
(-mb)f(-nc)}{((m+R_{1})z+n+R_{2})^{s}}\nonumber\\
&  =G(z,s;B_{-b},A_{-c};R_{1},R_{2})+\left(  e(-s)-1\right)  g\left(
z,s;B_{-b},A_{-c};R_{1},R_{2}\right) \label{5}%
\end{align}
where
\begin{equation}
g\left(  z,s;B_{-b},A_{-c};R_{1},R_{2}\right)  =\sum\limits_{\overset{m+R_{1}%
\leq0}{d(m+R_{1})>c(n+}R_{2})}\frac{f^{\ast}(-mb)f(-nc)}{((m+R_{1}%
)z+n+R_{2})^{s}}.\label{6a}%
\end{equation}
In (\ref{6a}), we replace $m$ by $-m$ and $n$ by $-n$ and separate the terms
with $m=R_{1}.$ Thus%
\begin{equation}
g\left(  z,s;B_{-b},A_{-c};R_{1},R_{2}\right)  =e(s/2)\left\{  \lambda_{R_{1}%
}f^{\ast}(bR_{1})L(s,A_{c};-R_{2})+h(z,s;B_{b},A_{c};-R_{1},-R_{2})\right\}
\label{6}%
\end{equation}
where
\[
h(z,s;B_{b},A_{c};-R_{1},-R_{2})=\sum\limits_{m>R_{1}}\sum\limits_{n>R_{2}%
+d(m-R_{1})/c}\frac{f^{\ast}(mb)f(nc)}{((m-R_{1})z+n-R_{2})^{s}}.
\]
Since$\ \operatorname{Re}((m-R_{1})z+n-R_{2})>0$ for $x>-d/c,$ using the
Euler's integral representation of $\Gamma(s),$ we find for $z\in\mathbb{K}$
and $\operatorname{Re}\left(  s\right)  >2$ that%
\[
\Gamma(s)h(z,s;B_{b},A_{c};-R_{1},-R_{2})=\sum\limits_{m>R_{1}}\sum
\limits_{n>R_{2}+d(m-R_{1})/c}f(nc)f^{\ast}(mb)\int\limits_{0}^{\infty}%
u^{s-1}\exp(-(m-R_{1})zu-(n-R_{2})u)du.
\]
We set $m=m^{\prime}c+j+\left[  R_{1}\right]  +1,$ $0\leq m^{\prime}<\infty, $
$0\leq j\leq c-1$ and $n=n^{\prime}+\left[  R_{2}+d(m-R_{1})/c\right]  +1. $
The double sum above becomes%
\begin{align*}
&  \sum\limits_{j=0}^{c-1}\sum\limits_{m^{\prime}=0}^{\infty}\sum
\limits_{n^{\prime}=0}^{\infty}f^{\ast}(b(m^{\prime}c+j+\left[  R_{1}\right]
+1))f(c(n^{\prime}+\left[  R_{2}+d(m^{\prime}c+j-\left\{  R_{1}\right\}
+1)/c\right]  +1))\\
&  \times\int\limits_{0}^{\infty}u^{s-1}\exp(-(m^{\prime}c+j-\left\{
R_{1}\right\}  +1)zu)\exp(-(n^{\prime}+\left[  R_{2}+d(m^{\prime}c+j-\left\{
R_{1}\right\}  +1)/c\right]  +1-R_{2})u)du
\end{align*}
Replacing $j+1$ by $j,$ and using that $d\equiv0(\operatorname{mod}$ $k),$ put
$m^{\prime}=mk+\mu,$ $0\leq m<\infty,$ $0\leq\mu\leq k-1,$ and put $n^{\prime
}=nk+v,$ $0\leq n<\infty,$ $0\leq v\leq k-1$, we have
\begin{align}
&  \Gamma(s)h(z,s;B_{b},A_{c};-R_{1},-R_{2})\nonumber\\
&  =\sum\limits_{j=1}^{c}\sum\limits_{\mu=0}^{k-1}\sum\limits_{v=0}%
^{k-1}f^{\ast}(b(\mu c+j+\left[  R_{1}\right]  ))f(c(v+\left[  R_{2}%
+d(j-\left\{  R_{1}\right\}  )/c\right]  +1))\nonumber\\
&  \quad\times\int\limits_{0}^{\infty}u^{s-1}\exp(-(c\mu+j-\left\{
R_{1}\right\}  )zu-(v+\left[  R_{2}+d(j-\left\{  R_{1}\right\}  )/c\right]
+1-R_{2})u)\nonumber\\
&  \quad\times\sum\limits_{m=0}^{\infty}\sum\limits_{n=0}^{\infty}%
\exp(-mku\left(  cz+d\right)  -nku)du\nonumber\\
&  =\sum\limits_{j=1}^{c}\sum\limits_{\mu=0}^{k-1}\sum\limits_{v=0}%
^{k-1}f^{\ast}(b(\mu c+j+\left[  R_{1}\right]  ))f(c(v+\left[  R_{2}%
+d(j-\left\{  R_{1}\right\}  )/c\right]  +1))\nonumber\\
&  \times\int\limits_{0}^{\infty}u^{s-1}\frac{\exp\left(  -(c\mu+j-\left\{
R_{1}\right\}  )zu\right)  }{1-\exp(-ku(cz+d))}\frac{\exp\left(  -\left(
v+1-R_{2}+d\mu+\left[  R_{2}+d(j-\left\{  R_{1}\right\}  )/c\right]  \right)
u\right)  }{1-\exp(-ku)}du\nonumber\\
&  =-\sum\limits_{j=1}^{c}\sum\limits_{\mu=0}^{k-1}\sum\limits_{v=0}%
^{k-1}f^{\ast}(b(\mu c+j+\left[  R_{1}\right]  ))f(c(\left[  R_{2}%
+d(j-\left\{  R_{1}\right\}  )/c\right]  -v))\nonumber\\
&  \quad\times\int\limits_{0}^{\infty}u^{s-1}\frac{\exp(-(c\mu+j-\left\{
R_{1}\right\}  )(cz+d)ku/ck)}{\exp(-ku(cz+d))-1}\frac{\exp(((v+\left\{
(dj+\rho)/c\right\}  )/k)ku)}{\exp(ku)-1}du\nonumber\\
&  =\sum\limits_{j=1}^{c}\sum\limits_{\mu=0}^{k-1}\sum\limits_{v=0}%
^{k-1}f^{\ast}(b(\mu c+j+\left[  R_{1}\right]  ))f(c(\left[  R_{2}%
+d(j-\left\{  R_{1}\right\}  )/c\right]  -v))\frac{I(z,s,c,d,r_{1},r_{2}%
)}{1-e(s)}\label{7}%
\end{align}
where
\[
I(z,s,c,d,r_{1},r_{2})=\int\limits_{C}u^{s-1}\frac{\exp(-\left(
(c\mu+j-\left\{  R_{1}\right\}  )/ck\right)  (cz+d)ku)}{\exp(-ku(cz+d))-1}%
\frac{\exp(((v+\left\{  (dj+\rho)/c\right\}  )/k)ku)}{\exp(ku)-1}du.
\]
Here, in the next to the last step, we have multiplied the numerator and
denominator by $\exp(ku)$ and then replaced $k-1-v$ by $v.$ In the last step,
we have used a classical method of Riemann to convert the integral over
$(0,\infty)$ to a loop integral \cite{21}. Combining (\ref{5}), (\ref{6}) and
(\ref{7}) we deduce (\ref{13-a}). By analytic continuation the result is valid
for all $s$. The proof of (\ref{13}) is completely analogous.
\end{proof}

We will also need the following theorem whose proof is similar to the proof of
(\ref{13-a}).

\begin{theorem}
Under the conditions of Theorem \ref{mainteo}, for $a\equiv d\equiv0\left(
\operatorname{mod}k\right)  $ we have%
\begin{align}
&  (cz+d)^{-s}\Gamma(s)G(Vz,s;B_{-\beta},A_{-\alpha};r_{1},r_{2})\label{34}\\
&  =\Gamma(s)G(z,s;A_{\alpha b},B_{\beta c};R_{1},R_{2})-2i\Gamma(s)\sin(\pi
s)f(-\alpha bR_{1})L(s,B_{-\beta c};-R_{2})\nonumber\\
&  +e(-s/2)\sum\limits_{j=1}^{c}\sum\limits_{\mu=0}^{k-1}\sum\limits_{v=0}%
^{k-1}f(-\alpha b(\mu c+j+\left[  R_{1}\right]  ))f^{\ast}(-\beta c(\left[
R_{2}+d(j-\left\{  R_{1}\right\}  )/c\right]  -v))I(z,s,c,d,r_{1}%
,r_{2}),\nonumber
\end{align}
where $I(z,s,c,d,r_{1},r_{2})$ is given by (\ref{int}).
\end{theorem}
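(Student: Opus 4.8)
The plan is to follow the proof of (\ref{13-a}) line for line, replacing the pair of sequences $(A,B)=(A_{1},B_{1})$ everywhere by $(B_{-\beta},A_{-\alpha})$; only the case $a\equiv d\equiv0\pmod{k}$ is needed. For $z\in\mathbb{H}$ and $\operatorname{Re}(s)>2$ I would first write
\[
G(Vz,s;B_{-\beta},A_{-\alpha};r_{1},r_{2})=\sum\limits_{m,n=-\infty}^{\infty}f^{\ast}(-\beta m)f(-\alpha n)\left\{  \frac{(M+R_{1})z+N+R_{2}}{cz+d}\right\}  ^{-s},
\]
with $M=ma+nc$, $N=mb+nd$, and invert the unimodular substitution by $m=dM-cN$, $n=aN-bM$. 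Since $a\equiv d\equiv0\pmod{k}$ the numerator collapses to $f^{\ast}(-\beta(dM-cN))f(-\alpha(aN-bM))=f(\alpha bM)f^{\ast}(\beta cN)$, which after renaming $(M,N)\mapsto(m,n)$ is exactly the numerator of $G(z,s;A_{\alpha b},B_{\beta c};R_{1},R_{2})$. Applying Lemma \ref{le1} with $E=m+R_{1}$, $F=n+R_{2}$, $C=c$, $D=d$ splits the sum into a principal part, which reassembles into $G(z,s;A_{\alpha b},B_{\beta c};R_{1},R_{2})$, and an exceptional part over $m+R_{1}\leq0$, $d(m+R_{1})>c(n+R_{2})$ carrying the factor $e(-s)$, so that, as in the analogue of (\ref{5})--(\ref{6a}),
\[
(cz+d)^{-s}G(Vz,s;B_{-\beta},A_{-\alpha};r_{1},r_{2})=G(z,s;A_{\alpha b},B_{\beta c};R_{1},R_{2})+(e(-s)-1)\,g.
\]

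Replacing $(m,n)$ by $(-m,-n)$ in $g$, extracting the factor $e(s/2)$, and separating the boundary line $m=R_{1}$ (contributing, as in (\ref{13-a}), a factor $\lambda_{R_{1}}$) gives $g=e(s/2)\{\lambda_{R_{1}}f(-\alpha bR_{1})L(s,B_{-\beta c};-R_{2})+h\}$, where $L(s,B_{-\beta c};-R_{2})=\sum_{n>R_{2}}f^{\ast}(-\beta cn)(n-R_{2})^{-s}$ is exactly the instance of (\ref{12}) appearing in (\ref{34}) and $h=\sum_{m>R_{1}}\sum_{n>R_{2}+d(m-R_{1})/c}f(-\alpha bm)f^{\ast}(-\beta cn)((m-R_{1})z+n-R_{2})^{-s}$. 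Next I would process $h$ exactly as in the proof of Theorem \ref{mainteo}: since $\operatorname{Re}((m-R_{1})z+n-R_{2})>0$ on $\mathbb{K}$, insert the Euler integral for $\Gamma(s)$, set $m=m'c+j+[R_{1}]+1$ and $n=n'+[R_{2}+d(m-R_{1})/c]+1$, then $m'=mk+\mu$, $n'=nk+v$ using $d\equiv0\pmod{k}$, sum the geometric series in $m,n$, multiply numerator and denominator by $e^{ku}$ while relabelling $k-1-v\mapsto v$, and convert the integral over $(0,\infty)$ into the loop integral $C$ by Riemann's device. This produces the analogue of (\ref{7}),
\[
\Gamma(s)h=\frac{1}{1-e(s)}\sum\limits_{j=1}^{c}\sum\limits_{\mu=0}^{k-1}\sum\limits_{v=0}^{k-1}f(-\alpha b(\mu c+j+[R_{1}]))f^{\ast}(-\beta c([R_{2}+d(j-\{R_{1}\})/c]-v))\,I(z,s,c,d,r_{1},r_{2}),
\]
with $I$ as in (\ref{int}).

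Finally I would multiply the second display above by $\Gamma(s)$ and substitute the expressions just obtained for $g$ and $\Gamma(s)h$, using the elementary identities $(e(-s)-1)e(s/2)=e(-s/2)-e(s/2)=-2i\sin(\pi s)$ and $(e(-s)-1)/(1-e(s))=e(-s)$, the latter giving the coefficient $(e(-s)-1)e(s/2)/(1-e(s))=e(-s/2)$ of the triple sum; this yields (\ref{34}) for $z\in\mathbb{K}$ and $\operatorname{Re}(s)>2$, and analytic continuation in $s$ (both sides being meromorphic, the $L$-factor continued as in Section 3) extends it to all $s$. The only genuinely delicate point, already present in the proof of (\ref{13-a}), is the passage from the real integral over $(0,\infty)$ to the loop integral $C$, together with the fixed branch $0<\arg u<2\pi$ of $u^{s}$ that produces the denominator $1-e(s)$; everything else is bookkeeping, and the one thing to watch is the placement of the subscripts — that $b$ attaches to $\alpha$ and $c$ to $\beta$, with the sign changes $-\beta\mapsto\beta c$, $-\alpha\mapsto\alpha b$ coming solely from $a\equiv d\equiv0\pmod{k}$.
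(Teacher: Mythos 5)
Your proposal is correct and takes essentially the same route the paper intends: the paper omits the proof of (\ref{34}), stating only that it is "similar to the proof of (\ref{13-a})", and your argument is precisely that adaptation, with the substitution $m=dM-cN$, $n=aN-bM$ correctly producing $f(\alpha bM)f^{\ast}(\beta cN)$ under $a\equiv d\equiv0\pmod{k}$, and the constants $(e(-s)-1)e(s/2)=-2i\sin(\pi s)$ and $(e(-s)-1)e(s/2)/(1-e(s))=e(-s/2)$ matching the stated coefficients. Your explicit retention of the factor $\lambda_{R_{1}}$ on the $L$-term is in fact more careful than the theorem as printed, which silently drops it.
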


Additionally, setting $a=d=0$ and $c=-b=1$ in (\ref{34}), we have%
\begin{align}
&  z^{-s}\Gamma(s)G(-1/z,s;B_{-\beta},A_{-\alpha};r_{1},r_{2})\nonumber\\
&  =\Gamma(s)G(z,s;A_{-\alpha},B_{\beta};R_{1},R_{2})-2i\Gamma(s)\sin(\pi
s)f(\alpha R_{1})L(s,B_{-\beta};-R_{2})\nonumber\\
&  \quad+e(-s/2)\sum\limits_{\mu=0}^{k-1}\sum\limits_{v=0}^{k-1}f(\alpha
(\mu+1+\left[  R_{1}\right]  ))f^{\ast}(-\beta(\left[  R_{2}\right]
-v))I(z,s,1,0,r_{1},r_{2}).\label{14}%
\end{align}

\section{The periodic analogue of Dedekind sum}

Our main results can be simplified when $s$ is an integer. In this case,
$I(z,s,c,d,r_{1},r_{2})$ can be evaluated by the residue theorem with the aid
of (\ref{8}). Therefore, if $s=-N,$ where $N$ is a nonnegative integer, a
simple calculation yields%
\begin{equation}
I(z,-N,c,d,r_{1},r_{2}) =2\pi ik^{N}\sum\limits_{m+n=N+2}B_{m}\left(
\frac{c\mu+j-\left\{  R_{1}\right\}  }{ck}\right)  B_{n}\left(  \frac
{v+\left\{  (dj+\rho)/c\right\}  }{k}\right)  \frac{\left(  -(cz+d)\right)
^{m-1}}{m!n!}.\label{int1}%
\end{equation}
From which it is seen for $s=-N$ that Theorem \ref{mainteo} is valid for
$z\in\mathbb{H}$ by analytic continuation.

\subsection{The case $s=r_{1}=r_{2}=0$}

For $s=r_{1}=r_{2}=0,$ (\ref{int1}) becomes
\[
I(z,0,c,d,0,0) =-\frac{\pi i}{cz+d}B_{2}\left(  \frac{v+\left\{  dj/c\right\}
}{k}\right)  -\pi i(cz+d)B_{2}\left(  \frac{c\mu+j}{ck}\right)  +2\pi
iB_{1}\left(  \frac{c\mu+j}{ck}\right)  B_{1}\left(  \frac{v+\left\{
dj/c\right\}  }{k}\right)  .
\]

To begin with, let us consider the case $a\equiv d\equiv0\left(
\operatorname{mod}k\right)  .$ Then, (\ref{13-a}) can be written as%
\begin{align}
&  \lim_{s\rightarrow0}\Gamma(s)\left(  \frac{1}{\left(  cz+d\right)  ^{s}%
}G(Vz,s;A,B)-G(z,s;B_{-b},A_{-c})\right) \nonumber\\
&  =-2if^{\ast}(0)\lim_{s\rightarrow0}\Gamma(s)\sin(\pi s)L(s,A_{c}%
;0)+\sum\limits_{j=1}^{c}\sum\limits_{\mu=0}^{k-1}\sum\limits_{v=0}%
^{k-1}f^{\ast}(b(\mu c+j))f(c(\left[  dj/c\right]
-v))I(z,0,c,d,0,0).\label{9}%
\end{align}
We shall evaluate the triple sums in (\ref{9}). Let%
\begin{align*}
&  \sum\limits_{j=1}^{c}\sum\limits_{\mu=0}^{k-1}\sum\limits_{v=0}%
^{k-1}f^{\ast}\left(  b(\mu c+j)\right)  f\left(  c(\left[  d(j)/c\right]
-v)\right)  \left(  -\frac{\pi i}{cz+d}B_{2}\left(  \frac{v+\left\{
dj/c\right\}  }{k}\right)  \right. \\
&  \quad\left.  -\pi i(cz+d)B_{2}\left(  \frac{c\mu+j}{ck}\right)  +2\pi
iB_{1}\left(  \frac{c\mu+j}{ck}\right)  B_{1}\left(  \frac{v+\left\{
dj/c\right\}  }{k}\right)  \right) \\
&  =T_{1}+T_{2}+T_{3}.
\end{align*}
Firstly,
\begin{align*}
T_{1}  &  =-\pi i\frac{1}{cz+d}\sum\limits_{j=1}^{c}\sum\limits_{\mu=0}%
^{k-1}f^{\ast}(b(\mu c+j))\sum\limits_{v=0}^{k-1}f(c(\left[  dj/c\right]
-v))B_{2}\left(  \frac{v+\left\{  dj/c\right\}  }{k}\right) \\
&  =-\pi i\frac{2}{cz+d}kB_{0}(B)\sum\limits_{j=1}^{c}\sum\limits_{v=0}%
^{k-1}f(c(\left[  dj/c\right]  -v))P_{2}\left(  \frac{v+\left\{  dj/c\right\}
}{k}\right) \\
&  =-\pi i\frac{2}{cz+d}kB_{0}(B)\sum\limits_{j=1}^{c}\sum\limits_{v=0}%
^{k-1}f(-cv)P_{2}\left(  \frac{v+dj/c}{k}\right)  .
\end{align*}
For the convenience with the notation $P_{r}(x,A),$ denote the sum over $v$ as
$P_{r}(x,A_{c}),$ namely,%
\begin{equation}
k^{r-1}\sum\limits_{v=0}^{k-1}f(-cv)P_{r}\left(  \frac{v+x}{k}\right)
=P_{r}(x,A_{c}).\label{pr*}%
\end{equation}
Note that $P_{r}(x,A_{1})=P_{r}(x,A).$ Thus, for $\left(  d,c\right)  =1$ and
$d\equiv0\left(  \operatorname{mod}k\right)  $,
\begin{align*}
\sum\limits_{j=1}^{c}P_{r}\left(  dj/c,A_{c}\right)   &  =k^{r-1}%
\sum\limits_{v=0}^{k-1}f(-cv)\sum\limits_{j=1}^{c}P_{r}\left(  \frac
{v+dj/c}{k}\right) \\
&  =k^{r-1}\sum\limits_{v=0}^{k-1}f(-cv)\sum\limits_{j=1}^{c}P_{r}\left(
\frac{v}{k}+\frac{mj}{c}\right) \\
&  =k^{r-1}c^{1-r}\sum\limits_{v=0}^{k-1}f(-cv)P_{r}\left(  \frac{cv}%
{k}\right) \\
&  =c^{1-r}P_{r}\left(  0,A\right)  .
\end{align*}
Therefore, with the use of $P_{r}\left(  0,A\right)  =(-1)^{r}B_{r}(A)/r!,$
for $r\geq2$ or $r=0$ \cite[Eq. (2.12)]{10},
\[
T_{1}=-\frac{\pi i}{c\left(  cz+d\right)  }B_{0}(B)B_{2}(A).
\]
Secondly, using (\ref{10}) and (\ref{26})
\begin{align*}
T_{2}  &  =-\pi i(cz+d)\sum\limits_{j=1}^{c}\sum\limits_{\mu=0}^{k-1}f^{\ast
}(b(\mu c+j))B_{2}\left(  \frac{c\mu+j}{ck}\right)  \sum\limits_{v=0}%
^{k-1}f(c(\left[  dj/c\right]  -v))\\
&  =-2\pi i(cz+d)kB_{0}(A)\sum\limits_{n=0}^{ck-1}f^{\ast}(-bn)P_{2}\left(
\frac{n}{ck}\right) \\
&  =-2\pi i(cz+d)kB_{0}(A)\sum\limits_{m=0}^{k-1}\sum\limits_{v=0}%
^{c-1}f^{\ast}(-bm)P_{2}\left(  \frac{vk+m}{ck}\right) \\
&  =-\frac{2\pi i}{c}(cz+d)B_{0}(A)P_{2}(0,B_{b}).
\end{align*}
Finally,
\begin{align*}
&  (2\pi i)^{-1}T_{3}\\
&  =\sum\limits_{j=1}^{c}\sum\limits_{\mu=0}^{k-1}\sum\limits_{v=0}%
^{k-1}f^{\ast}(b(\mu c+j))f(c(\left[  dj/c\right]  -v))B_{1}\left(  \frac
{c\mu+j}{ck}\right)  B_{1}\left(  \frac{v+\left\{  dj/c\right\}  }{k}\right)
\\
&  =\sum\limits_{j=1}^{c-1}\sum\limits_{\mu=0}^{k-1}f^{\ast}(b(\mu
c+j))P_{1}\left(  \frac{c\mu+j}{ck}\right)  \sum\limits_{v=0}^{k-1}f(c(\left[
dj/c\right]  -v))P_{1}\left(  \frac{v-\left[  dj/c\right]  +dj/c}{k}\right) \\
&  \quad+\sum\limits_{\mu=0}^{k-1}f^{\ast}(bc(\mu+1))B_{1}\left(  \frac{\mu
+1}{k}\right)  \sum\limits_{v=0}^{k-1}f(-cv)P_{1}\left(  \frac{v}{k}\right) \\
&  =\sum\limits_{\mu=0}^{k-1}\sum\limits_{j=1}^{c}f^{\ast}(b(\mu
c+j))P_{1}\left(  \frac{c\mu+j}{ck}\right)  \sum\limits_{v=0}^{k-1}%
f(-cv)P_{1}\left(  \frac{v+dj/c}{k}\right) \\
&  \quad+P_{1}\left(  0,A_{c}\right)  \sum\limits_{\mu=1}^{k}f^{\ast}%
(bc\mu)\left\{  B_{1}\left(  \frac{\mu}{k}\right)  -P_{1}\left(  \frac{\mu}%
{k}\right)  \right\}  .
\end{align*}
So,
\[
T_{3}=2\pi i\sum\limits_{n=1}^{ck}f^{\ast}(bn)P_{1}\left(  \frac{n}%
{ck}\right)  P_{1}\left(  \frac{dn}{c},A_{c}\right)  +2\pi if^{\ast}%
(0)P_{1}\left(  0,A_{c}\right)  .
\]
Then, we find that
\begin{align}
T_{1}+T_{2}+T_{3}  &  =-\frac{\pi i}{c(cz+d)}B_{0}(B)B_{2}(A)-\frac{2\pi i}%
{c}(cz+d)B_{0}(A)P_{2}(0,B_{b})\nonumber\\
&  \quad+2\pi i\sum\limits_{n=1}^{ck}f^{\ast}(bn)P_{1}\left(  \frac{n}%
{ck}\right)  P_{1}\left(  \frac{dn}{c},A_{c}\right)  +2\pi if^{\ast}%
(0)P_{1}\left(  0,A_{c}\right)  .\label{15}%
\end{align}

\begin{definition}
Let $c$ and $d$ be coprime integers with $d\equiv0\left(  \operatorname{mod}%
k\right)  $ and $c>0$. For $bc\equiv-1$ $\left(  \operatorname{mod}d\right)
,$ the periodic Dedekind sum $s\left(  d,c;B_{b},A_{c}\right)  $ is defined
by
\[
s\left(  d,c;B_{b},A_{c}\right)  =\sum\limits_{n=1}^{ck}f^{\ast}%
(bn)P_{1}\left(  \frac{n}{ck}\right)  P_{1}\left(  \frac{dn}{c},A_{c}\right)
.
\]

\end{definition}

We emphasize that the sum $s\left(  d,c;B,A\right)  $ has been defined by
Berndt \cite[Section 7]{15} without restrictions $ad-bc=1$ and $a\equiv
d\equiv0\left(  \operatorname{mod}k\right)  $.

To compute $L(0;A_{\beta};\theta)$ we utilize (\ref{Ls}), the formula
$\zeta\left(  0,\theta\right)  =1/2-\theta=-B_{1}\left(  \theta\right)  ,$
$0<\theta\leq1,$ and $B_{1}\left(  1-\theta\right)  =-B_{1}\left(
\theta\right)  .$ We then see that
\begin{align}
L(0;A_{\beta};\theta)  &  =-\sum\limits_{j=0}^{k-1}f(\beta\left(  j-\left[
\theta\right]  +\lambda_{\theta}\right)  )B_{1}\left(  \frac{j+\left\{
\theta\right\}  +\lambda_{\theta}}{k}\right) \nonumber\\
&  =-\sum\limits_{j=0}^{k-2}f(\beta\left(  j-\left[  \theta\right]
+\lambda_{\theta}\right)  )P_{1}\left(  \frac{j+\left\{  \theta\right\}
+\lambda_{\theta}}{k}\right)  -f(\beta\left(  -1-\left[  \theta\right]
+\lambda_{\theta}\right)  )B_{1}\left(  1-\frac{1-\left\{  \theta\right\}
-\lambda_{\theta}}{k}\right) \nonumber\\
&  =\sum\limits_{j=1}^{k-1}f(-\beta\left(  j+1+\left[  \theta\right]
-\lambda_{\theta}\right)  )P_{1}\left(  \frac{j+1-\left\{  \theta\right\}
-\lambda_{\theta}}{k}\right)  +f(-\beta\left(  1+\left[  \theta\right]
-\lambda_{\theta}\right)  )P_{1}\left(  \frac{1-\left\{  \theta\right\}
-\lambda_{\theta}}{k}\right) \nonumber\\
&  =\sum\limits_{j=0}^{k-1}f(-\beta j)P_{1}\left(  \frac{j-\theta}{k}\right)
=P_{1}\left(  -\theta,A_{\beta}\right)  .\label{L0}%
\end{align}
Therefore, we have
\begin{equation}
-2if^{\ast}(0)\lim_{s\rightarrow0}s\Gamma(s)\frac{\sin(\pi s)}{s\pi}\pi
L(s;A_{c};0)=-2i\pi f^{\ast}(0)P_{1}\left(  0,A_{c}\right)  .\label{16}%
\end{equation}
Next, substituting (\ref{15}) and (\ref{16}) in (\ref{9}) gives%
\begin{align}
&  \lim_{s\rightarrow0}\Gamma(s)\left(  (cz+d)^{-s}G(Vz,s;A,B)-G(z,s;B_{-b}%
,A_{-c})\right) \label{17a}\\
&  \ =2\pi is\left(  d,c;B_{b},A_{c}\right)  -\frac{\pi i}{c(cz+d)}%
B_{0}(B)B_{2}(A)-\frac{2\pi i}{c}(cz+d)B_{0}(A)P_{2}\left(  0,B_{b}\right)
.\nonumber
\end{align}

The second case we will consider is $b\equiv c\equiv0\left(
\operatorname{mod}k\right)  .$ Now\textbf{\ }(\ref{13}) becomes\textbf{\ }%
\begin{align}
&  \lim_{s\rightarrow0}\Gamma(s)\left(  (cz+d)^{-s}G(Vz,s;A,B)-G(z,s;A_{d}%
,B_{a})\right) \nonumber\\
&  =-2if^{\ast}(0)\lim_{s\rightarrow0}\Gamma(s)\sin(\pi s)L(s;B_{-a}%
;0)\nonumber\\
&  \quad+\sum\limits_{j=1}^{c}\sum\limits_{\mu=0}^{k-1}\sum\limits_{v=0}%
^{k-1}f^{\ast}(-a(\left[  dj/c\right]  +v+d\mu))f(-d(\mu
c+j))I(z,0,c,d,0,0).\label{18}%
\end{align}
Let%
\begin{align*}
&  \sum\limits_{j=1}^{c}\sum\limits_{\mu=0}^{k-1}\sum\limits_{v=0}%
^{k-1}f(-dj))f^{\ast}(-a(\left[  dj/c\right]  -v+d\mu))\left(  -\pi
i(cz+d)B_{2}\left(  \frac{c\mu+j}{ck}\right)  \right. \\
&  \quad\left.  -\frac{\pi i}{cz+d}B_{2}\left(  \frac{v+\left\{  dj/c\right\}
}{k}\right)  +2\pi iB_{1}\left(  \frac{c\mu+j}{ck}\right)  B_{1}\left(
\frac{v+\left\{  dj/c\right\}  }{k}\right)  \right) \\
&  =T_{4}+T_{5}+T_{6}.
\end{align*}
Similar to $T_{1},$ $T_{2}$ and $T_{3}$, it can be obtained that\
\begin{align*}
T_{4}  &  =-\frac{2\pi i}{c}(cz+d)B_{0}(B)P_{2}(0,A_{d}),\\
T_{5}  &  =-\frac{2\pi i}{c\left(  cz+d\right)  }B_{0}(B)P_{2}\left(
0,A\right)  ,\\
T_{6}  &  =2\pi i\sum\limits_{n=1}^{ck}f(-dn)P_{1}\left(  \frac{n}{ck}\right)
P_{1}\left(  \frac{dn}{c},B_{-a}\right)  +2\pi if(0)P_{1}\left(
0,B_{-a}\right)  .
\end{align*}

\begin{definition}
Let $c$ and $d$ be coprime integers with $c\equiv0\left(  \operatorname{mod}%
k\right)  $ and $c>0$. For $ad\equiv1$ $\left(  \operatorname{mod}c\right)  ,$
the periodic Dedekind sum is defined by\textbf{\ }%
\[
s\left(  d,c;A_{d},B_{a}\right)  =\sum\limits_{n=1}^{ck}f(dn)P_{1}\left(
\dfrac{n}{ck}\right)  P_{1}\left(  \dfrac{dn}{c},B_{a}\right)  .
\]

\end{definition}

Consequently, we have%
\begin{align*}
T_{4}+T_{5}+T_{6}  &  =2\pi is\left(  d,c;A_{-d},B_{-a}\right)  -\frac{2\pi
i}{c}(cz+d)B_{0}(B)P_{2}(0,A_{d})\\
&  \quad-\frac{\pi i}{c(cz+d)}B_{0}(B)B_{2}\left(  A\right)  +2\pi
if(0)P_{1}\left(  0,B_{-a}\right)  .
\end{align*}
From (\ref{16}), the following equality holds%
\begin{equation}
-2if(0)\lim_{s\rightarrow0}\Gamma(s)\sin(\pi s)L(s;B_{-a};0)=-2i\pi
f(0)P_{1}\left(  0,B_{-a}\right)  .\label{25}%
\end{equation}
Substituting these in (\ref{18}) gives
\begin{align}
&  \lim_{s\rightarrow0}\Gamma(s)\left(  (cz+d)^{-s}G(Vz,s;A,B)-G(z,s;A_{d}%
,B_{a})\right) \label{19a}\\
&  =2\pi is\left(  d,c;A_{-d},B_{-a}\right)  -2\pi i\frac{cz+d}{c}%
B_{0}(B)P_{2}(0,A_{d})-\frac{\pi i}{c(cz+d)}B_{0}(B)B_{2}(A).\nonumber
\end{align}
We summarize the results obtained above in the next theorem.

\begin{theorem}
\label{d1}Let $z\in\mathbb{H}$. If\textbf{\ }$a\equiv d\equiv0\left(
\operatorname{mod}k\right)  ,$ then%
\begin{align}
&  \lim_{s\rightarrow0}\Gamma(s)\left(  (cz+d)^{-s}G(Vz,s;A,B)-G(z,s;B_{-b}%
,A_{-c})\right) \label{17}\\
&  =2\pi is\left(  d,c;B_{b},A_{c}\right)  -\frac{\pi i}{c(cz+d)}B_{0}%
(B)B_{2}(A)-2\pi i\frac{cz+d}{c}B_{0}(A)P_{2}\left(  0,B_{b}\right)
.\nonumber
\end{align}
If $b\equiv c\equiv0\left(  \operatorname{mod}k\right)  ,$ then%
\begin{align}
&  \lim_{s\rightarrow0}\Gamma(s)\left(  (cz+d)^{-s}G(Vz,s;A,B)-G(z,s;A_{d}%
,B_{a})\right) \label{19}\\
&  =2\pi is\left(  d,c;A_{-d},B_{-a}\right)  -\frac{\pi i}{c(cz+d)}%
B_{0}(B)B_{2}(A)-2\pi i\frac{cz+d}{c}B_{0}(B)P_{2}(0,A_{d}).\nonumber
\end{align}

\end{theorem}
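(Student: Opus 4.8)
The plan is to specialize Theorem \ref{mainteo} to $s=r_{1}=r_{2}=0$ and to evaluate every ingredient on the right-hand side explicitly. First I would handle the loop integral: by (\ref{int1}) with $N=0$ the function $I(z,-N,c,d,r_{1},r_{2})$ is a finite sum of products of two Bernoulli polynomials, and putting $s=r_{1}=r_{2}=0$ collapses it to the three-term expression for $I(z,0,c,d,0,0)$ displayed before the theorem, built from $B_{2}\big(\tfrac{v+\{dj/c\}}{k}\big)$, $B_{2}\big(\tfrac{c\mu+j}{ck}\big)$ and the product $B_{1}\big(\tfrac{c\mu+j}{ck}\big)B_{1}\big(\tfrac{v+\{dj/c\}}{k}\big)$. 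Because at $s=-N$ Theorem \ref{mainteo} is already known to hold for $z\in\mathbb{H}$, the identity being proved will be valid on all of $\mathbb{H}$.

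Next I would treat the case $a\equiv d\equiv 0\pmod k$. Substituting the three-term $I$ into the triple sum of (\ref{13-a}) splits it as $T_{1}+T_{2}+T_{3}$. For $T_{1}$ and $T_{2}$ the inner sum over $v$ (respectively over $\mu$) is recognized, via (\ref{10}) and (\ref{pr*}), as a multiple of $P_{2}(\cdot,A_{c})$ (respectively of a periodic Bernoulli function of $B_{b}$); Raabe's multiplication formula (\ref{26}) then collapses the remaining sum over $j$, and the identity $P_{r}(0,A)=(-1)^{r}B_{r}(A)/r!$ from \cite[Eq. (2.12)]{10} gives the closed forms $T_{1}=-\tfrac{\pi i}{c(cz+d)}B_{0}(B)B_{2}(A)$ and $T_{2}=-\tfrac{2\pi i}{c}(cz+d)B_{0}(A)P_{2}(0,B_{b})$. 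For $T_{3}$ one must be more careful: after peeling off the $j=c$ term, whose arguments $\tfrac{c\mu+c}{ck}=\tfrac{\mu+1}{k}$ and $\tfrac{v+\{d\}}{k}=\tfrac{v}{k}$ reach the endpoint $1$ of the unit interval where $B_{1}(1)=1/2$ disagrees with $P_{1}(1)=-1/2$, the double sum reassembles into $\sum_{n=1}^{ck}f^{\ast}(bn)P_{1}(\tfrac{n}{ck})P_{1}(\tfrac{dn}{c},A_{c})=s(d,c;B_{b},A_{c})$, together with the correction term $2\pi i f^{\ast}(0)P_{1}(0,A_{c})$ coming from $B_{1}(1)-P_{1}(1)=1$.

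Then I would dispose of the $L$-term. By (\ref{Ls}) together with $\zeta(0,\theta)=-B_{1}(\theta)$ and $B_{1}(1-\theta)=-B_{1}(\theta)$ one obtains, as in (\ref{L0}), $L(0;A_{\beta};\theta)=P_{1}(-\theta,A_{\beta})$, so $L(0;A_{c};0)=P_{1}(0,A_{c})$; combining this with $\lim_{s\to 0}\Gamma(s)\sin(\pi s)=\lim_{s\to 0}\pi\cdot s\Gamma(s)\cdot\tfrac{\sin\pi s}{\pi s}=\pi$ shows that the $L$-term in (\ref{13-a}) contributes $-2\pi i f^{\ast}(0)P_{1}(0,A_{c})$, precisely cancelling the correction term from $T_{3}$. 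Assembling $T_{1}+T_{2}+T_{3}$ with this cancellation yields (\ref{17}). The case $b\equiv c\equiv 0\pmod k$ is entirely parallel: the triple sum in (\ref{13}) splits as $T_{4}+T_{5}+T_{6}$; the same three devices (recognition of the inner sums, Raabe's formula, $P_{r}(0,A)=(-1)^{r}B_{r}(A)/r!$) give $T_{4},T_{5}$ in closed form and $T_{6}=2\pi i\,s(d,c;A_{-d},B_{-a})$ plus a boundary term, while the analogous $L$-evaluation (\ref{25}) cancels that boundary term, producing (\ref{19}).

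The main obstacle I expect is the bookkeeping in $T_{3}$ (and its twin $T_{6}$): one must isolate the degenerate index $j=c$ where the Bernoulli polynomials at the unit-interval endpoints differ from the periodized functions $P_{1}$, follow how the shifts $[dj/c]$ and $\{dj/c\}$ recombine when the inner $v$-sum is folded into $P_{1}(\tfrac{dn}{c},A_{c})$, and verify that the leftover $P_{1}(0,\cdot)$ contributions match exactly the limit of the $L$-function so that they cancel rather than merely simplify. Everything else reduces to routine use of Raabe's formula and the generating-function identities of Section 2.
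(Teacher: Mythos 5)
Your proposal is correct and follows essentially the same route as the paper: specialize (\ref{13-a}) and (\ref{13}) at $s=r_{1}=r_{2}=0$, evaluate the loop integral by residues via (\ref{int1}), split the triple sum into $T_{1}+T_{2}+T_{3}$ (resp.\ $T_{4}+T_{5}+T_{6}$) handled by (\ref{pr*}), Raabe's formula and $P_{r}(0,A)=(-1)^{r}B_{r}(A)/r!$, isolate the $j=c$, $\mu=k-1$ endpoint where $B_{1}$ and $P_{1}$ differ by $1$, and cancel the resulting $2\pi i f^{\ast}(0)P_{1}(0,A_{c})$ against the limit of the $L$-term computed from (\ref{L0}). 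The details you flag as the main obstacles (the $T_{3}$/$T_{6}$ bookkeeping and the exact cancellation with the $L$-term) are precisely the points the paper works through, and your treatment of them is accurate.
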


Now we are ready to prove a reciprocity formula for $s\left(  d,c;B_{b}%
,A_{c}\right)  $ analogous to that of the previous Dedekind sums.

\begin{theorem}
\label{rep1}Let $c$ and $d$ be coprime positive integers with $d\equiv0\left(
\operatorname{mod}k\right)  $. For $bc\equiv-1$ $\left(  \operatorname{mod}%
d\right)  ,$
\begin{align*}
&  s\left(  -c,d;A_{c},B_{-b}\right)  -s\left(  d,c;B_{b},A_{c}\right) \\
&  =P_{1}\left(  0,B_{-b}\right)  P_{1}\left(  0,A_{-c}\right)  -\frac{d}%
{c}B_{0}(A)P_{2}\left(  0,B_{b}\right)  -\frac{c}{d}B_{0}\left(  B\right)
P_{2}\left(  0,A_{c}\right)  -\frac{1}{2dc}B_{0}(B)B_{2}(A).
\end{align*}

\end{theorem}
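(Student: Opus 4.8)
The plan is to exploit the transformation formula \eqref{17} of Theorem \ref{d1} together with a suitable companion transformation applied to the ``inverse'' matrix, and then to compare the two. Concretely, with the matrix $V=\left(\begin{smallmatrix}a&b\\c&d\end{smallmatrix}\right)$ satisfying $ad-bc=1$, $c>0$, and $a\equiv d\equiv 0\ (\operatorname{mod}k)$, I would set $r_1=r_2=0$, so that $R_1=R_2=0$ and $\rho=0$, and record that \eqref{17} reads
\[
\lim_{s\to 0}\Gamma(s)\bigl((cz+d)^{-s}G(Vz,s;A,B)-G(z,s;B_{-b},A_{-c})\bigr)
=2\pi i\,s(d,c;B_b,A_c)-\frac{\pi i}{c(cz+d)}B_0(B)B_2(A)-\frac{2\pi i(cz+d)}{c}B_0(A)P_2(0,B_b).
\]
The key idea is then to apply the \emph{same} transformation machinery to $G(z,s;B_{-b},A_{-c})$ using the matrix $V'=\left(\begin{smallmatrix} d&-b\\ -c&a\end{smallmatrix}\right)$ (which is $V^{-1}$ up to sign, so $V'(Vz)=z$ and $V'$ has lower-left entry $-c$; one adjusts signs so the lower-left entry is positive, say via $\left(\begin{smallmatrix} -d&b\\ c&-a\end{smallmatrix}\right)$ acting on a reflected variable). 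This produces a second instance of Theorem~\ref{d1} (or of the companion Theorem around \eqref{34}), now expressing $\lim_{s\to0}\Gamma(s)\bigl(d^{-s}(\text{something})^{-s}G(z,s;B_{-b},A_{-c}) - G(\cdot\,;A_c,B_{-b})\bigr)$ in terms of $s(-c,d;A_c,B_{-b})$ and analogous $B_0,\,P_2$ terms. Adding the two transformation identities, the ``double'' modular substitution $V'V=\mathrm{Id}$ makes the left-hand $G$-terms telescope, so the combined left side collapses to an elementary expression coming only from the $\lambda$-type boundary terms and the $L(s;\cdot\,;0)$ contributions evaluated via \eqref{L0}; those are exactly what produce the $P_1(0,B_{-b})P_1(0,A_{-c})$ term on the right.

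More carefully, the mechanism is the standard one (as in Berndt): iterate the transformation formula along $V$ and then along $V^{-1}$, use that $\Gamma(s)\sin(\pi s)$ stays bounded while $\Gamma(s)$ has a simple pole at $s=0$, so that in the limit $s\to 0$ only the finite parts survive; the term $-2i\Gamma(s)\sin(\pi s)L(s;A_c;-R_2)f^\ast(bR_1)\lambda_{R_1}$ in \eqref{13-a} contributes $-2\pi i f^\ast(0)P_1(0,A_c)$-type pieces by \eqref{16} and \eqref{L0}. When I write down both \eqref{17} for $V$ and its analogue for $V^{-1}$ and subtract, the $G$-series on the overlapping side cancel identically, leaving
\[
2\pi i\,s(d,c;B_b,A_c)-2\pi i\,s(-c,d;A_c,B_{-b})
= -2\pi i\,P_1(0,B_{-b})P_1(0,A_{-c})+\frac{2\pi i d}{c}B_0(A)P_2(0,B_b)+\frac{2\pi i c}{d}B_0(B)P_2(0,A_c)+\frac{\pi i}{dc}B_0(B)B_2(A),
\]
after one checks that the $(cz+d)$- and $(cz+d)^{-1}$-weighted $B_0,B_2,P_2$ terms from the two applications recombine with the correct rational coefficients $d/c$, $c/d$, $1/(2dc)$ (here one uses $\det V=1$ to identify $c$ in the $V^{-1}$ step with the original $d$, etc.). Dividing by $2\pi i$ and rearranging yields the stated formula; the sign conventions for $A_{-c}$ versus $A_c$ are reconciled using the reflection identity $P_n(-x)=(-1)^nP_n(x)$ for $n\ge 2$ and the exceptional value $P_1(0,\cdot)$ recorded in the preliminaries.

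The main obstacle I anticipate is bookkeeping rather than conceptual: correctly tracking which sequence ($A$, $B$, their $\alpha$-dilates, and their hats) appears after the second modular substitution, and getting the signs and the index shifts in the triple sums to line up so that the residual elementary terms assemble into precisely $P_1(0,B_{-b})P_1(0,A_{-c})$ and the three rational multiples of $B_0 P_2$ and $B_0 B_2$. A secondary technical point is justifying that one may take $z\in\mathbb{H}$ (rather than merely $z\in\mathbb{K}$) in both applications of the transformation formula — this is legitimate because at $s=0$ formula \eqref{int1} shows $I(z,0,c,d,0,0)$ is an elementary rational function of $cz+d$, analytic on all of $\mathbb{H}$, so the identity \eqref{17} extends there by analytic continuation, as already noted after \eqref{int1}. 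Once $z$ ranges over $\mathbb{H}$, one is free to replace $z$ by $Vz$ in the second step without leaving the domain of validity.
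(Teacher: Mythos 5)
Your overall strategy---combine two instances of the transformation formula so that the $G$-series cancel and only elementary terms survive---has the right flavor, but the specific composition you propose cannot produce the reciprocity law. In Theorem \ref{d1} the Dedekind sum attached to a matrix $\left(\begin{smallmatrix}a'&b'\\c'&d'\end{smallmatrix}\right)$ is $s(d',c';\cdot)$: its modulus is the lower-left entry $c'$. Your second matrix $V'=\pm V^{-1}$ has lower-left entry $\mp c$, so the second application of the transformation formula yields a sum of the form $s(-a,c;\cdot)$---again a sum to the modulus $c$. Adding this to the first identity relates $s(d,c;B_b,A_c)$ to $s(-a,c;\cdot)$, not to $s(-c,d;A_c,B_{-b})$; the roles of $c$ and $d$ are never interchanged, so no reciprocity formula can come out of it. To produce a sum to the modulus $d$ you must compose $V$ with the inversion $S\colon z\mapsto -1/z$: the matrix $T=VS=\left(\begin{smallmatrix}b&-a\\d&-c\end{smallmatrix}\right)$ has lower-left entry $d$ and falls under the second congruence case $b\equiv c\equiv 0\ (\operatorname{mod}k)$ of Theorem \ref{d1}. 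This is exactly what the paper does: it writes \eqref{17} at the point $-1/z$, writes \eqref{19} for $T$ at $z$, and compares.

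Two further gaps. First, the telescoping then leaves over a third, genuinely nontrivial piece, namely $\lim_{s\to0}\Gamma(s)\bigl(z^{-s}G(-1/z,s;B_{-b},A_{-c})-G(z,s;A_{-c},B_b)\bigr)$, i.e.\ the transformation of $G$ under $S$ alone, which must be computed separately from \eqref{14} and \eqref{int1}. It is this degenerate ($c=1$) instance of the transformation formula whose ``Dedekind sum'' collapses to the product $P_{1}(0,B_{-b})P_{1}(0,A_{-c})$; contrary to what you assert, that product does \emph{not} come from the $\lambda$-boundary/$L(s;\cdot;0)$ contributions, which only supply single factors of the form $f(0)P_{1}(0,\cdot)$ and in fact cancel against pieces of the triple sums. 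Second, after combining the three identities the right-hand side still depends on $z$ through $(dz-c)$, $z$ and $1/z$; ``dividing by $2\pi i$ and rearranging'' is not enough. One must specialize to $z=c/d$, which kills the $(dz-c)$-terms and turns $1/z$ and $z$ into $d/c$ and $c/d$, to obtain the stated rational coefficients (only the $B_0(B)B_2(A)$ coefficient is $z$-independent before this substitution).
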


\begin{proof}
Let $Tz=V\left(  -1/z\right)  =(bz-a)/(dz-c).$ Replacing $z$ by $-1/z$ in
(\ref{17}) gives\textbf{\ }%
\begin{align}
&  \lim_{s\rightarrow0}z^{s}\Gamma(s)\left(  \frac{1}{\left(  dz-c\right)
^{s}}G\left(  V\left(  -1/z\right)  ,s;A,B\right)  -z^{-s}G\left(
-1/z,s;B_{-b},A_{-c}\right)  \right) \nonumber\\
&  =2\pi is\left(  d,c;B_{b},A_{c}\right)  -\pi iz\frac{B_{0}(B)B_{2}%
(A)}{c\left(  dz-c\right)  }-2\pi i\frac{dz-c}{cz}B_{0}(A)P_{2}\left(
0,B_{b}\right) \label{23}%
\end{align}
and replacing $Vz$ by $Tz$ in (\ref{19}) gives
\begin{align}
&  \lim_{s\rightarrow0}\Gamma(s)\left(  \frac{1}{\left(  dz-c\right)  ^{s}%
}G(Tz,s;A,B)-G(z,s;A_{-c},B_{b})\right) \label{24}\\
&  =2\pi is\left(  -c,d;A_{c},B_{-b}\right)  -\pi i\frac{B_{0}(B)B_{2}%
(A)}{d\left(  dz-c\right)  }-\frac{2\pi i}{d}(dz-c)B_{0}(B)P_{2}\left(
0,A_{-c}\right)  .\nonumber
\end{align}
Lastly, the following limit needs to be computed\textbf{\ }
\[
\lim_{s\rightarrow0}\Gamma(s)\left(  z^{-s}G\left(  -1/z,s;B_{-b}%
,A_{-c}\right)  -G(z,s;A_{-c},B_{b})\right)  .
\]
For\ $s=r_{1}=r_{2}=0,$\textbf{\ }$\beta=b$ and $\alpha=c,$\ (\ref{14}) and
(\ref{int1}) become
\begin{align}
&  \lim_{s\rightarrow0}\Gamma(s)\left(  \frac{1}{z^{s}}G(-1/z,s;B_{-b}%
,A_{-c})-G(z,s;A_{-c},B_{b})\right) \label{20}\\
&  =-2if(0)\lim_{s\rightarrow0}\Gamma(s)\sin(\pi s)L(s;B_{-b};0)+\sum
\limits_{\mu=0}^{k-1}\sum\limits_{v=0}^{k-1}f(c(\mu+1))f^{\ast}%
(bv))I(z,0,1,0,0,0)\nonumber
\end{align}
and
\[
I(z,0,1,0,0,0)=-\frac{\pi i}{z}B_{2}\left(  \frac{v}{k}\right)  -\pi
izB_{2}\left(  \frac{\mu+1}{k}\right)  +2\pi iB_{1}\left(  \frac{\mu+1}%
{k}\right)  B_{1}\left(  \frac{v}{k}\right)  ,
\]
respectively. It is obvious that
\begin{align*}
\sum\limits_{\mu=0}^{k-1}f(c(\mu+1))\sum\limits_{v=0}^{k-1}f^{\ast}%
(bv)B_{2}\left(  \frac{v}{k}\right)   &  =2B_{0}(A)P_{2}\left(  0,B_{b}%
\right)  ,\\
\sum\limits_{v=0}^{k-1}f^{\ast}(bv)\sum\limits_{\mu=0}^{k-1}f(c(\mu
+1))B_{2}\left(  \frac{\mu+1}{k}\right)   &  =2B_{0}\left(  B\right)
P_{2}\left(  0,A_{c}\right)
\end{align*}
and
\[
\sum\limits_{v=0}^{k-1}f^{\ast}(bv)B_{1}\left(  \frac{v}{k}\right)
\sum\limits_{\mu=0}^{k-1}f(c(\mu+1))B_{1}\left(  \frac{\mu+1}{k}\right)
=P_{1}\left(  0,B_{-b}\right)  P_{1}\left(  0,A_{-c}\right)  +f\left(
0\right)  P_{1}\left(  0,B_{-b}\right)  .
\]
Then (\ref{20}) takes the form, with the help of (\ref{25}), that%
\begin{align}
&  \lim_{s\rightarrow0}\Gamma(s)\left(  \frac{1}{z^{s}}G(-1/z,s;B_{-b}%
,A_{-c},0,0)-G(z,s;A_{-c},B_{b})\right) \label{22}\\
&  =2\pi iP_{1}\left(  0,B_{-b}\right)  P_{1}\left(  0,A_{-c}\right)
-\frac{2\pi i}{z}B_{0}(A)P_{2}\left(  0,B_{b}\right)  -2\pi izB_{0}\left(
B\right)  P_{2}\left(  0,A_{c}\right)  .\nonumber
\end{align}
Thus, using the fact that%
\begin{align*}
&  \lim_{s\rightarrow0}z^{s}\Gamma(s)\left(  \frac{1}{\left(  dz-c\right)
^{s}}G\left(  Tz,s;A,B\right)  -G(z,s;A_{-c},B_{b})\right) \\
&  =\lim_{s\rightarrow0}z^{s}\Gamma(s)\left(  \frac{1}{\left(  dz-c\right)
^{s}}G(V\left(  -1/z\right)  ,s;A,B)-\frac{1}{z^{s}}G\left(  -1/z,s;B_{-b}%
,A_{-c}\right)  \right) \\
&  \quad+\lim_{s\rightarrow0}z^{s}\Gamma(s)\left(  \frac{1}{z^{s}}G\left(
-1/z,s;B_{-b},A_{-c}\right)  -G(z,s;A_{-c},B_{b})\right)  ,
\end{align*}
it follows from (\ref{23}), (\ref{24}) and (\ref{22}) that%
\begin{align*}
&  s\left(  -c,d;A_{c},B_{-b}\right)  -s\left(  d,c;B_{b},A_{c}\right) \\
&  =P_{1}\left(  0,B_{-b}\right)  P_{1}\left(  0,A_{-c}\right)  -\frac{1}%
{2dc}B_{0}(B)B_{2}(A)-\frac{1}{z}B_{0}(A)P_{2}\left(  0,B_{b}\right) \\
&  \quad-zB_{0}\left(  B\right)  P_{2}\left(  0,A_{c}\right)  -\frac{1}%
{cz}\left(  dz-c\right)  B_{0}(A)P_{2}\left(  0,B_{b}\right)  +\frac{1}%
{d}(dz-c)B_{0}(B)P_{2}\left(  0,A_{-c}\right)  .
\end{align*}
From this, the reciprocity formula follows by setting $z=c/d$.
\end{proof}

\begin{remark}
The reciprocity formula may be stated as
\begin{align*}
&  s\left(  c,d;A_{c},B_{b}\right)  +s\left(  d,c;B_{b},A_{c}\right) \\
&  =-P_{1}\left(  0,B_{-b}\right)  P_{1}\left(  0,A_{-c}\right)  +\frac
{1}{2dc}B_{0}(B)B_{2}(A)+\frac{d}{c}B_{0}(A)P_{2}\left(  0,B_{b}\right)
+\frac{c}{d}B_{0}\left(  B\right)  P_{2}\left(  0,A_{c}\right)  -f(0)P_{1}%
\left(  0,B\right)  .
\end{align*}
Indeed, if we use that for $x\not \in \mathbb{Z},$
\[
P_{1}\left(  -x,B_{-b}\right)  =\sum\limits_{v=0}^{k-1}f^{\ast}(-bv)P_{1}%
\left(  -\frac{v+x}{k}\right)  =-P_{1}\left(  x,B_{b}\right)
\]
and for $x\in\mathbb{Z}$,
\begin{align*}
P_{1}\left(  -x,B_{-b}\right)   &  =\sum\limits_{\substack{v=0 \\v\not \equiv
x\left(  \operatorname{mod}k\right)  }}^{k-1}f^{\ast}(bv)P_{1}\left(
\frac{v-x}{k}\right)  +f^{\ast}(xb)P_{1}\left(  0\right) \\
&  =-\sum\limits_{\substack{v=0 \\v\not \equiv -x\left(  \operatorname{mod}%
k\right)  }}^{k-1}f^{\ast}(-bv)P_{1}\left(  \frac{v+x}{k}\right)  +f^{\ast
}(xb)P_{1}\left(  0\right) \\
&  =-\sum\limits_{v=0}^{k-1}f^{\ast}(-bv)P_{1}\left(  \frac{v+x}{k}\right)
+2f^{\ast}(xb)P_{1}\left(  0\right)  =-P_{1}\left(  x,B_{b}\right)  -f^{\ast
}(xb),
\end{align*}
then we deduce that%
\begin{equation}
s\left(  -c,d;A_{c},B_{-b}\right)  =-s\left(  c,d;A_{c},B_{b}\right)
-f(0)P_{1}\left(  0,B\right)  ,\label{51}%
\end{equation}
where we have used that $d\equiv0\left(  \operatorname{mod}k\right)  $ and
$bc\equiv-1\left(  \operatorname{mod}k\right)  .$
\end{remark}

\begin{remark}
Berndt \cite[Theorem 7.3]{15} has derived a reciprocity formula for the sum
$s\left(  c,d;A,B\right)  $ without restriction $d\equiv0\left(
\operatorname{mod}k\right)  $ by using Riemann--Stieltjes integral.
\end{remark}

\subsection{The case $s=0$, and $r_{1}$ and $r_{2}$ arbitrary}

Let $s=0,$ and $r_{1}$ and $r_{2}$\ arbitrary. From (\ref{int1}) we have%

\begin{align}
I(z,0,c,d,R_{1},R_{2})  &  =-\frac{\pi i}{cz+d}B_{2}\left(  \frac{v+\left\{
\left(  dj+\rho\right)  /c\right\}  }{k}\right)  -\pi i(cz+d)B_{2}\left(
\frac{c\mu+j-\left\{  R_{1}\right\}  }{ck}\right) \nonumber\\
&  \quad+2\pi iB_{1}\left(  \frac{c\mu+j-\left\{  R_{1}\right\}  }{ck}\right)
B_{1}\left(  \frac{v+\left\{  \left(  dj+\rho\right)  /c\right\}  }{k}\right)
.\label{40}%
\end{align}
Let us consider the case $a\equiv d\equiv0\left(  \operatorname{mod}k\right)
.$ Then, (\ref{13-a}) can be written as%
\begin{align}
&  \lim_{s\rightarrow0}\Gamma(s)\left(  \frac{1}{\left(  cz+d\right)  ^{s}%
}G(Vz,s;A,B;r_{1},r_{2})-G(z,s;B_{-b},A_{-c};R_{1},R_{2})\right) \nonumber\\
&  =-2if^{\ast}(bR_{1})\lambda_{R_{1}}\lim_{s\rightarrow0}\Gamma(s)\sin(\pi
s)L(s;A_{c};-R_{2})\label{36}\\
&  \quad+\sum\limits_{j=1}^{c}\sum\limits_{\mu=0}^{k-1}\sum\limits_{v=0}%
^{k-1}f^{\ast}(b(\mu c+j+\left[  R_{1}\right]  ))f(c(\left[  R_{2}+d\left(
j-\left\{  R_{1}\right\}  \right)  /c\right]  -v))I(z,0,c,d,R_{1}%
,R_{2}).\nonumber
\end{align}
Now we evaluate the triple sums in (\ref{36}). Let%
\begin{align*}
&  \sum\limits_{j=1}^{c}\sum\limits_{\mu=0}^{k-1}\sum\limits_{v=0}%
^{k-1}f^{\ast}(b(\mu c+j+\left[  R_{1}\right]  ))f(c(\left[  R_{2}+d\left(
j-\left\{  R_{1}\right\}  \right)  /c\right]  -v))\\
&  \quad\times\left(  -\frac{\pi i}{cz+d}B_{2}\left(  \frac{v+\left\{  \left(
dj+\rho\right)  /c\right\}  }{k}\right)  -\pi i(cz+d)B_{2}\left(  \frac
{c\mu+j-\left\{  R_{1}\right\}  }{ck}\right)  \right. \\
&  \qquad\ \left.  +2\pi iB_{1}\left(  \frac{c\mu+j-\left\{  R_{1}\right\}
}{ck}\right)  B_{1}\left(  \frac{v+\left\{  \left(  dj+\rho\right)
/c\right\}  }{k}\right)  \right) \\
&  \ =T_{7}+T_{8}+T_{9}.
\end{align*}
Since $\left(  dj+\rho\right)  /c=d\left(  j-\left\{  R_{1}\right\}  \right)
/c+R_{2}-\left[  R_{2}\right]  ,$ we conclude that%
\begin{align*}
T_{7}  &  =-\frac{\pi i}{cz+d}\sum\limits_{j=1}^{c}\sum\limits_{v=0}%
^{k-1}f(c(\left[  R_{2}+d\left(  j-\left\{  R_{1}\right\}  \right)  /c\right]
-v))B_{2}\left(  \frac{v+\left\{  \left(  dj+\rho\right)  /c\right\}  }%
{k}\right)  \sum\limits_{\mu=0}^{k-1}f^{\ast}(b(\mu c+j+\left[  R_{1}\right]
))\\
&  =-\frac{2\pi i}{cz+d}kB_{0}(B)\sum\limits_{j=1}^{c}\sum\limits_{v=0}%
^{k-1}f(-cv)P_{2}\left(  \frac{v+R_{2}}{k}+\frac{d\left(  j-\left\{
R_{1}\right\}  \right)  }{ck}\right)  .
\end{align*}
Thus, for $\left(  d,c\right)  =1$ and $d=mk$ (since $d\equiv0\left(
\operatorname{mod}k\right)  $) it follows from (\ref{26}) that
\begin{align*}
T_{7}  &  =-\frac{2\pi i}{c\left(  cz+d\right)  }kB_{0}(B)\sum\limits_{v=0}%
^{k-1}f(-cv)P_{2}\left(  \frac{cv+cR_{2}-mkR_{1}}{k}+m\left[  R_{1}\right]
\right) \\
&  =-\frac{2\pi i}{c\left(  cz+d\right)  }B_{0}(B)P_{2}\left(  cR_{2}%
-dR_{1},A\right)  .
\end{align*}
Again utilizing (\ref{26}) we have
\begin{align*}
T_{8}  &  =-\pi i(cz+d)\sum\limits_{j=1}^{c}\sum\limits_{\mu=0}^{k-1}f^{\ast
}(b(\mu c+j+\left[  R_{1}\right]  ))B_{2}\left(  \frac{c\mu+j-\left\{
R_{1}\right\}  }{ck}\right) \\
&  \qquad\times\sum\limits_{v=0}^{k-1}f((c\left[  R_{2}+d\left(  j-\left\{
R_{1}\right\}  \right)  /c\right]  -v))\\
&  =-2\pi i(cz+d)kB_{0}(A)\sum\limits_{n=1}^{ck}f^{\ast}(bn)P_{2}\left(
\frac{n-R_{1}}{ck}\right) \\
&  =-2\pi i(cz+d)kB_{0}(A)\sum\limits_{v=0}^{c-1}\sum\limits_{r=1}^{k}f^{\ast
}(br)P_{2}\left(  \frac{v}{c}+\frac{r-R_{1}}{ck}\right) \\
&  =-\frac{2\pi i}{c}(cz+d)B_{0}(A)P_{2}(R_{1},B_{b}).
\end{align*}
We now consider%
\begin{align*}
(2\pi i)^{-1}T_{9}  &  =\sum\limits_{j=1}^{c}\sum\limits_{\mu=0}^{k-1}%
\sum\limits_{v=0}^{k-1}f^{\ast}(b(\mu c+j+\left[  R_{1}\right]  ))f(c\left[
R_{2}+d\left(  j-\left\{  R_{1}\right\}  \right)  /c\right]  -v))\\
&  \quad\times B_{1}\left(  \frac{c\mu+j-\left\{  R_{1}\right\}  }{ck}\right)
B_{1}\left(  \frac{v+\left\{  \left(  dj+\rho\right)  /c\right\}  }{k}\right)
.
\end{align*}
We want to replace $B_{1}\left(  \frac{c\mu+j-\left\{  R_{1}\right\}  }%
{ck}\right)  $ by $P_{1}\left(  \frac{c\mu+j-\left\{  R_{1}\right\}  }%
{ck}\right)  ,$ this is valid except for $R_{1}\in%
\mathbb{Z}
,$ $\mu=k-1$ and $j=c.$ If $R_{1}\in%
\mathbb{Z}
,$ firstly separate the term $j=c,$ then write $P_{1}\left(  \frac
{c\mu+j-\left\{  R_{1}\right\}  }{ck}\right)  $ in place of $B_{1}\left(
\frac{c\mu+j-\left\{  R_{1}\right\}  }{ck}\right)  ,$ later add and subtract
the term $j=c$. Then
\begin{align*}
T_{9}  &  =2\pi i\sum\limits_{j=1}^{c}\sum\limits_{\mu=0}^{k-1}\sum
\limits_{v=0}^{k-1}f^{\ast}(b(\mu c+j+\left[  R_{1}\right]  ))f(c\left(
\left[  R_{2}+dj/c\right]  -v\right)  ) P_{1}\left(  \frac{c\mu+j-\left\{
R_{1}\right\}  }{ck}\right)  P_{1}\left(  \frac{v+\left\{  \left(
dj+\rho\right)  /c\right\}  }{k}\right) \\
&  \quad+2\pi iP_{1}(R_{2},A_{c})f^{\ast}(bR_{1}).
\end{align*}
Therefore, for arbitrary real number $R_{1},$\ we have
\begin{align*}
T_{9}  &  =2\pi i\sum\limits_{j=1}^{c}\sum\limits_{\mu=0}^{k-1}f^{\ast}(b(\mu
c+j+\left[  R_{1}\right]  ))P_{1}\left(  \frac{c\mu+j-\left\{  R_{1}\right\}
}{ck}\right) \\
&  \qquad\times\sum\limits_{v=0}^{k-1}f(c\left(  \left[  R_{2}+d\left(
j-\left\{  R_{1}\right\}  \right)  /c\right]  -v\right)  )P_{1}\left(
\frac{v+\left\{  \left(  dj+\rho\right)  /c\right\}  }{k}\right)  +2\pi
iP_{1}(R_{2},A_{c})f^{\ast}(bR_{1})\lambda_{R_{1}}\\
&  =2\pi i\sum\limits_{n=1}^{ck}f^{\ast}(bn)P_{1}\left(  \frac{n-R_{1}}%
{ck}\right)  P_{1}\left(  \frac{d\left(  n-R_{1}\right)  }{c}+R_{2}%
,A_{c}\right)  +2\pi iP_{1}(R_{2},A_{c})f^{\ast}(bR_{1})\lambda_{R_{1}%
}\mathbf{.}%
\end{align*}

\begin{definition}
Let $c$ and $d$ be coprime integers with $d\equiv0\left(  \operatorname{mod}%
k\right)  $ and $c>0$. For $bc\equiv-1$ $\left(  \operatorname{mod}d\right)
$, the generalized periodic Dedekind sum $s\left(  d,c;A_{b},A_{c};x,y\right)
$ is defined by
\[
s\left(  d,c;A_{b},A_{c};x,y\right)  =\sum\limits_{n=1}^{ck}f(bn)P_{1}\left(
\frac{n+y}{ck}\right)  P_{1}\left(  \frac{d\left(  n+y\right)  }{c}%
+x,A_{c}\right)  \text{.}%
\]

\end{definition}

Note that $s\left(  d,c;A_{b},A_{c};0,0\right)  =s\left(  d,c;A_{b}%
,A_{c}\right)  .$ Moreover $s\left(  d,c;A_{b},A_{c};x,y\right)  $ is the
natural generalization of the generalized Dedekind sum $s\left(
d,c;x,y\right)  $ \cite{13,12} and of the generalized Dedekind character sum
$s\left(  d,c;\chi,\overline{\chi};x,y\right)  $ \cite{2}.

So, we have obtained that
\begin{align}
T_{7}+T_{8}+T_{9} &  =2\pi is\left(  d,c;B_{b},A_{c},R_{2},-R_{1}\right)
-\frac{2\pi i}{c\left(  cz+d\right)  }B_{0}(B)P_{2}\left(  cR_{2}%
-dR_{1},A\right) \nonumber\\
&  \quad-\frac{2\pi i}{c}(cz+d)B_{0}(A)P_{2}(R_{1},B_{b})+2\pi iP_{1}%
(R_{2},A_{c})f^{\ast}(bR_{1})\lambda_{R_{1}}.\label{37}%
\end{align}

On the other hand, (\ref{L0}) gives%
\[
\lim_{s\rightarrow0}L(s;A_{c};-R_{2})=P_{1}(R_{2},A_{c})
\]
and then%
\begin{equation}
-2i\lambda_{R_{1}}f^{\ast}(bR_{1})\lim_{s\rightarrow0}\Gamma(s)\sin(\pi
s)L(s;A_{c};-R_{2})=-2i\pi\lambda_{R_{1}}f^{\ast}(bR_{1})P_{1}(R_{2}%
,A_{c}).\label{38}%
\end{equation}
Next, combining (\ref{36}), (\ref{37}) and (\ref{38}) yields%
\begin{align}
&  \lim_{s\rightarrow0}\Gamma(s)\left(  \frac{1}{\left(  cz+d\right)  ^{s}%
}G(Vz,s;A,B;r_{1},r_{2})-G(z,s;B_{-b},A_{-c};R_{1},R_{2})\right) \nonumber\\
&  =2\pi is\left(  d,c;B_{b},A_{c},R_{2},-R_{1}\right)  -\frac{2\pi
i}{c\left(  cz+d\right)  }B_{0}(B)P_{2}\left(  cR_{2}-dR_{1},A\right)
-\frac{2\pi i}{c}(cz+d)B_{0}(A)P_{2}(R_{1},B_{b}).\label{43}%
\end{align}
Now, let us consider the case $b\equiv c\equiv0\left(  \operatorname{mod}%
k\right)  .$ Then, from (\ref{40}), (\ref{13}) can be written as%
\begin{align}
&  \lim_{s\rightarrow0}\Gamma(s)\left(  \frac{1}{\left(  cz+d\right)  ^{s}%
}G(Vz,s;A,B;r_{1},r_{2})-G(z,s;A_{d},B_{a};R_{1},R_{2})\right) \nonumber\\
&  =-2if(-dR_{1})\lambda_{R_{1}}\lim_{s\rightarrow0}\Gamma(s)\sin(\pi
s)L(s;B_{-a};-R_{2})\nonumber\\
&  \quad+\sum\limits_{j=1}^{c}\sum\limits_{\mu=0}^{k-1}\sum\limits_{v=0}%
^{k-1}f^{\ast}(-a(\left[  R_{2}+d(j-\left\{  R_{1}\right\}  )/c\right]
-v+d\mu))f(-d(j+\left[  R_{1}\right]  )) I(z,0,c,d,R_{1},R_{2}).\label{39}%
\end{align}
Similar to (\ref{38}), we have%
\begin{equation}
2i\lambda_{R_{1}}f(-dR_{1})\lim_{s\rightarrow0}\Gamma(s)\sin(\pi
s)L(s;B_{-a};-R_{2})=-2\pi i\lambda_{R_{1}}f(-dR_{1})P_{1}(R_{2}%
,B_{-a}).\label{42}%
\end{equation}
Let%
\begin{align*}
&  \sum\limits_{j=1}^{c}\sum\limits_{\mu=0}^{k-1}\sum\limits_{v=0}%
^{k-1}f^{\ast}(-a(\left[  R_{2}+d(j-\left\{  R_{1}\right\}  )/c\right]
-v+d\mu))f(-d(j+\left[  R_{1}\right]  ))\\
&  \quad\times\left(  -\frac{\pi i}{cz+d}B_{2}\left(  \frac{v+\left\{  \left(
dj+\rho\right)  /c\right\}  }{k}\right)  -\pi i(cz+d)B_{2}\left(  \frac
{c\mu+j-\left\{  R_{1}\right\}  }{ck}\right)  \right. \\
&  \qquad\ \left.  +2\pi iB_{1}\left(  \frac{c\mu+j-\left\{  R_{1}\right\}
}{ck}\right)  B_{1}\left(  \frac{v+\left\{  \left(  dj+\rho\right)
/c\right\}  }{k}\right)  \right) \\
&  \ =T_{10}+T_{11}+T_{12}.
\end{align*}
To compute $T_{10}$, we first evaluate the sums over $\mu$ and over $v,$
respectively, and then use the facts $B_{2}\left(  \left\{  x\right\}
\right)  =2P_{2}\left(  x\right)  $ and $\rho=c\left\{  R_{2}\right\}
-d\left\{  R_{1}\right\}  ,$ to find that%
\begin{align*}
T_{10}  &  =-\frac{\pi i}{cz+d}\sum\limits_{j=1}^{c}\sum\limits_{v=0}%
^{k-1}f(-d(j+\left[  R_{1}\right]  ))B_{2}\left(  \frac{v+\left\{  \left(
dj+\rho\right)  /c\right\}  }{k}\right) \\
&  \quad\times\sum\limits_{\mu=0}^{k-1}f^{\ast}(-a(\left[  R_{2}+d(j-\left\{
R_{1}\right\}  )/c\right]  -v+d\mu))\\
&  =-\frac{2\pi i}{cz+d}B_{0}\left(  B\right)  \sum\limits_{j=1}%
^{c}f(-dj)P_{2}\left(  \frac{d\left(  j-R_{1}\right)  }{c}+\left\{
R_{2}\right\}  \right)  .
\end{align*}
Now, setting $c=qk$ (since $c\equiv0\left(  \operatorname{mod}k\right)  $),
then taking\ $j\rightarrow\mu k+r,$ $0\leq\mu\leq q-1,$ $1\leq r\leq k,$ gives%
\begin{align*}
T_{10}  &  =-\frac{2\pi i}{cz+d}B_{0}\left(  B\right)  \sum\limits_{\mu
=0}^{q-1}\sum\limits_{r=1}^{k}f(-dr)P_{2}\left(  \frac{d\mu}{q}+\frac{d\left(
r-R_{1}\right)  }{qk}+R_{2}\right) \\
&  =-\frac{2\pi i}{cz+d}\frac{B_{0}\left(  B\right)  }{q}\sum\limits_{r=1}%
^{k}f(-dr)P_{2}\left(  \frac{dr-dR_{1}+cR_{2}}{k}\right) \\
&  =-\frac{2\pi i}{c\left(  cz+d\right)  }B_{0}\left(  B\right)  P_{2}\left(
cR_{2}-dR_{1},A\right)  .
\end{align*}
Secondly, for computing $T_{11},$ evaluate the sums over $v$ and over $\mu,$
respectively, to find
\begin{align*}
T_{11}  &  =-\pi i(cz+d)\sum\limits_{j=1}^{c}\sum\limits_{\mu=0}^{k-1}f(-d(j+
\left[  R_{1}\right]  ))B_{2}\left(  \frac{c\mu+j-\left\{  R_{1}\right\}
}{ck}\right) \\
&  \quad\times\sum\limits_{v=0}^{k-1}f^{\ast}(-a(\left[  R_{2}+d(j-\left\{
R_{1}\right\}  )/c\right]  -v+d\mu))\\
&  =-\frac{2\pi i}{c}(cz+d)B_{0}\left(  B\right)  P_{2}(-R_{1},A_{d}).
\end{align*}
After similar arguments in evaluating $T_{9},$ we deduce for arbitrary real
number $R_{1}$ that \
\begin{align*}
T_{12}  &  =2\pi i\sum\limits_{j=1}^{c}\sum\limits_{\mu=0}^{k-1}%
\sum\limits_{v=0}^{k-1}f^{\ast}(-a(\left[  R_{2}+d(j-\left\{  R_{1}\right\}
)/c\right]  -v+d\mu))f(-d(j+\left[  R_{1}\right]  ))\\
&  \quad\times P_{1}\left(  \frac{c\mu+j-\left\{  R_{1}\right\}  }{ck}\right)
P_{1}\left(  \frac{v+\left\{  \left(  dj+\rho\right)  /c\right\}  }{k}\right)
+2\pi iP_{1}(R_{2},B_{-a})f(-dR_{1})\lambda_{R_{1}}\\
&  =2\pi i\sum\limits_{n=1}^{ck}f(-d(n+\left[  R_{1}\right]  ))P_{1}\left(
\frac{n-\left\{  R_{1}\right\}  }{ck}\right)  \sum\limits_{v=0}^{k-1}f^{\ast
}(av)P_{1}\left(  \frac{v+d\frac{n-\left\{  R_{1}\right\}  }{c}+R_{2}}%
{k}\right) \\
&\quad +2\pi iP_{1}(R_{2},B_{-a})f(-dR_{1})\lambda_{R_{1}}\\
&  =2\pi i\sum\limits_{n=1}^{ck}f(-dn)P_{1}\left(  \frac{n-R_{1}}{ck}\right)
P_{1}\left(  \frac{d\left(  n-R_{1}\right)  }{c}+R_{2},B_{-a}\right) +2\pi
iP_{1}(R_{2},B_{-a})f(-dR_{1})\lambda_{R_{1}}\text{\textbf{.}}%
\end{align*}

\begin{definition}
Let $c$ and $d$ be coprime integers with $c\equiv0\left(  \operatorname{mod}%
k\right)  $ and $c>0$. For $ad\equiv1$ $\left(  \operatorname{mod}c\right)  $,
the generalized periodic Dedekind sum $s\left(  d,c;A_{d},B_{a};x,y\right)  $
is defined by
\[
s\left(  d,c;A_{d},B_{a};x,y\right)  =\sum\limits_{n=1}^{ck}f(dn)P_{1}\left(
\frac{n+y}{ck}\right)  P_{1}\left(  \frac{d\left(  n+y\right)  }{c}%
+x,B_{a}\right)  .
\]

\end{definition}

So, we have obtained%
\begin{align}
T_{10}+T_{11}+T_{12}  &  =2\pi is\left(  d,c;A_{-d},B_{-a};R_{2}%
,-R_{1}\right)  -\frac{2\pi i}{c\left(  cz+d\right)  }B_{0}\left(  B\right)
P_{2}\left(  cR_{2}-dR_{1},A\right) \nonumber\\
&  \quad-\frac{2\pi i}{c}(cz+d)B_{0}\left(  B\right)  P_{2}(-R_{1},A_{d})+2\pi
iP_{1}(R_{2},B_{-a})f(-dR_{1})\lambda_{R_{1}}\mathbf{.}\label{41}%
\end{align}

Combining (\ref{39}), (\ref{42}) and (\ref{41}) yields%
\begin{align}
&  \lim_{s\rightarrow0}\Gamma(s)\left(  \frac{1}{\left(  cz+d\right)  ^{s}%
}G(Vz,s;A,B;r_{1},r_{2})-G(z,s;A_{d},B_{a};R_{1},R_{2})\right) \nonumber\\
&  =2\pi is\left(  d,c;A_{-d},B_{-a};R_{2},-R_{1}\right)  -\frac{2\pi
i}{c\left(  cz+d\right)  }B_{0}\left(  B\right)  P_{2}\left(  cR_{2}%
-dR_{1},A\right) \nonumber\\
&  \quad-\frac{2\pi i}{c}(cz+d)B_{0}\left(  B\right)  P_{2}(-R_{1}%
,A_{d}).\label{44}%
\end{align}

We now derive a reciprocity formula for $s\left(  d,c;B_{b},A_{c};R_{2}%
,R_{1}\right)  .$

\begin{theorem}
\label{rep2}Let $c$ and $d$ be coprime positive integers with $d\equiv0\left(
\operatorname{mod}k\right)  $. For $bc\equiv-1$ $\left(  \operatorname{mod}%
d\right) $,
\begin{align*}
&  s\left(  -c,d;A_{c},B_{-b};-R_{1},-R_{2}\right)  -s\left(  d,c;B_{b}%
,A_{c};R_{2},-R_{1}\right) \\
&  =P_{1}\left(  R_{2},B_{-b}\right)  P_{1}\left(  -R_{1},A_{-c}\right)
-\frac{1}{cd}B_{0}(B)P_{2}(cR_{2}-dR_{1},A)-\frac{d}{c}B_{0}(A)P_{2}\left(
R_{2},B_{-b}\right)  -\frac{c}{d}B_{0}\left(  B\right)  P_{2}\left(
R_{1},A_{c}\right)  .
\end{align*}

\end{theorem}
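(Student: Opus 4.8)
The plan is to follow the proof of Theorem~\ref{rep1} almost verbatim, with the three inputs~(\ref{17}),~(\ref{19}) and the $s=r_{1}=r_{2}=0$ case of~(\ref{14}) replaced by their counterparts for arbitrary $r_{1},r_{2}$, namely~(\ref{43}),~(\ref{44}) and the specialization of~(\ref{14}) in which the free parameters are set equal to $R_{1},R_{2}$. First I set
\[
Tz=V(-1/z)=\frac{bz-a}{dz-c},
\]
a modular substitution with positive lower-left entry $d$ and with entries $(a_{T},b_{T},c_{T},d_{T})=(b,-a,d,-c)$, so that $a_{T}\equiv d_{T}\equiv 0\ (\operatorname{mod}k)$ and~(\ref{44}) is applicable to $T$; the quantities attached to $T$ are $R_{1}^{(T)}=br_{1}+dr_{2}=R_{2}$, $R_{2}^{(T)}=-(ar_{1}+cr_{2})=-R_{1}$, and $\rho^{(T)}=d\{-R_{1}\}+c\{R_{2}\}$.

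I would then assemble three limit identities, all for $z\in\mathbb{H}$ as $s\to0$. $(a)$ Replacing $z$ by $-1/z$ in~(\ref{43}), and using $c(-1/z)+d=(dz-c)/z$ together with $G(V(-1/z),\cdot)=G(Tz,\cdot)$, expresses
\[
\lim_{s\to0}\Gamma(s)\Bigl(\tfrac{z^{s}}{(dz-c)^{s}}G(Tz,s;A,B;r_{1},r_{2})-G(-1/z,s;B_{-b},A_{-c};R_{1},R_{2})\Bigr)
\]
as $2\pi i\,s(d,c;B_{b},A_{c};R_{2},-R_{1})$ plus terms rational in $z$. $(b)$ Applying~(\ref{44}) to the matrix $T$: under the relabelling $(a,b,c,d;R_{1},R_{2})\mapsto(b,-a,d,-c;R_{2},-R_{1})$ the Dedekind sum it produces is exactly $s(-c,d;A_{c},B_{-b};-R_{1},-R_{2})$, and one obtains an expression for
\[
\lim_{s\to0}\Gamma(s)\Bigl(\tfrac{1}{(dz-c)^{s}}G(Tz,s;A,B;r_{1},r_{2})-G(z,s;A_{-c},B_{b};R_{2},-R_{1})\Bigr)
\]
as $2\pi i\,s(-c,d;A_{c},B_{-b};-R_{1},-R_{2})$ plus terms rational in $z$. $(c)$ Applying~(\ref{14}) with $(\alpha,\beta)=(c,b)$ and free parameters $R_{1},R_{2}$, then letting $s\to0$, evaluating $I(z,0,1,0,\cdot)$ via~(\ref{int1}) and collapsing the resulting double sum over $\mu,v$ by means of the multiplication formula~(\ref{26}), the definitions~(\ref{10})--(\ref{10-1}) of $P_{0},P_{1},P_{2}$ of a sequence, and the limit~(\ref{L0}) for $L(s;\cdot;\cdot)$, yields a formula for
\[
\lim_{s\to0}\Gamma(s)\bigl(z^{-s}G(-1/z,s;B_{-b},A_{-c};R_{1},R_{2})-G(z,s;A_{-c},B_{b};R_{2},-R_{1})\bigr)
\]
whose leading term is $2\pi i\,P_{1}(R_{2},B_{-b})P_{1}(-R_{1},A_{-c})$, the remainder being rational in $z$.

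Finally I would combine these exactly as in Theorem~\ref{rep1}: the bracket in $(b)$ equals $z^{-s}$ times the bracket in $(a)$ plus the bracket in $(c)$, and since $z^{-s}\to1$ the three right-hand sides add. Cancelling $\Gamma(s)$, dividing by $2\pi i$, and simplifying the rational parts with the reflection identity $P_{r}(-x)=(-1)^{r}P_{r}(x)$ (so $P_{2}(-R_{1},A_{-c})=P_{2}(R_{1},A_{c})$ and $P_{2}(-R_{2},B_{-b})=P_{2}(R_{2},B_{b})$) makes all $z$-dependent terms cancel; equivalently, one may set $z=c/d$ once the $(dz-c)^{-1}$ terms have cancelled. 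What is left is the stated reciprocity formula. The one genuinely delicate step is $(c)$: exactly as in the evaluations of $T_{9}$ and $T_{12}$, one cannot replace $B_{1}(\cdot)$ by $P_{1}(\cdot)$ when $R_{1}$ or $R_{2}$ is an integer, so the terms with integral shift must be separated, handled by hand and restored, which brings in extra contributions $f(0)P_{1}(0,\cdot)$ and $\lambda_{R_{i}}$-multiples of $P_{1}$ that have to be matched against the $\lambda_{R_{i}}$-terms from $-2i\Gamma(s)\sin(\pi s)L(s;\cdot;\cdot)$ (cf.~(\ref{38}),~(\ref{42})) and seen to cancel; keeping all the signs and the $\{\cdot\}/[\cdot]$ shifts consistent, so that the leading term emerges precisely as $P_{1}(R_{2},B_{-b})P_{1}(-R_{1},A_{-c})$ and the elementary terms assemble into $-\tfrac{1}{cd}B_{0}(B)P_{2}(cR_{2}-dR_{1},A)-\tfrac{d}{c}B_{0}(A)P_{2}(R_{2},B_{-b})-\tfrac{c}{d}B_{0}(B)P_{2}(R_{1},A_{c})$, is where the real work lies; the rest is a transcription of the $R_{1}=R_{2}=0$ argument.
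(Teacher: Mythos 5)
Your proposal is correct and follows essentially the same route as the paper: the paper likewise replaces $z$ by $-1/z$ in (\ref{43}) to get (\ref{45}), applies (\ref{44}) to $Tz=V(-1/z)$ to get (\ref{46}), evaluates the $s=0$ limit of (\ref{14}) with $(\alpha,\beta)=(c,b)$ and parameters $R_{1},R_{2}$ to get (\ref{48}), and combines the three by the same telescoping identity before setting $z=c/d$. The only cosmetic difference is that the paper does not argue that the $z$-dependent terms cancel identically; it simply evaluates the resulting rational expression at $z=c/d$, which is the alternative you already allow for.
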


\begin{proof}
Replacing $z$ by $-1/z$ in (\ref{43}),\textbf{\ }%
\begin{align}
&  \lim_{s\rightarrow0}z^{s}\Gamma(s)\left(  \frac{1}{\left(  dz-c\right)
^{s}}G\left(  Tz,s;A,B;r_{1},r_{2}\right)  -z^{-s}G\left(  -1/z,s;B_{-b}%
,A_{-c};R_{1},R_{2}\right)  \right) \nonumber\\
&  =2\pi is\left(  d,c;B_{b},A_{c};R_{2},-R_{1}\right)  -\frac{2\pi i}{c}%
\frac{z}{\left(  dz-c\right)  }B_{0}(B)P_{2}(cR_{2}-dR_{1},A)-\frac{2\pi
i}{cz}(dz-c)B_{0}(A)P_{2}(R_{1},B_{b}).\label{45}%
\end{align}
Replacing $Vz$ by $Tz$ in (\ref{44}),
\begin{align}
&  \lim_{s\rightarrow0}\Gamma(s)\left(  \frac{1}{\left(  dz-c\right)  ^{s}%
}G(Tz,s;A,B;r_{1},r_{2})-G(z,s;A_{-c},B_{b};R_{2},-R_{1})\right) \nonumber\\
&  =2\pi is\left(  -c,d;A_{c},B_{-b};-R_{1},-R_{2}\right)  -\frac{2\pi i}%
{d}\frac{1}{dz-c}B_{0}(B)P_{2}(cR_{2}-dR_{1},A)-\frac{2\pi i}{d}%
(dz-c)B_{0}(B)P_{2}(-R_{2},A_{-c}).\label{46}%
\end{align}
Computing the equation (\ref{14}) for $\beta=b,$ $\alpha=c$ and the limiting
case $s=0$ gives%
\begin{align}
&  \lim_{s\rightarrow0}\Gamma(s)\left(  \frac{1}{z^{s}}G(-1/z,s;B_{-b}%
,A_{-c},R_{1},R_{2})-G(z,s;A_{-c},B_{b};R_{2},-R_{1})\right) \nonumber\\
&  =-2if(cR_{1})\lambda_{R_{1}}\lim_{s\rightarrow0}\Gamma(s)\sin(\pi
s)L(s;B_{-b};-R_{2})\nonumber\\
&  \quad+\sum\limits_{\mu=0}^{k-1}\sum\limits_{v=0}^{k-1}f(c(\mu+1+\left[
R_{1}\right]  ))f^{\ast}(-b\left(  \left[  R_{2}\right]  -v\right)
)I(z,0,1,0,R_{1},R_{2})\label{49}%
\end{align}
where%
\begin{align*}
&I(z,0,1,0,R_{1},R_{2}) \\
&=-\frac{\pi i}{z}B_{2}\left(  \frac{v+\left\{
R_{2}\right\}  }{k}\right)  -\pi izB_{2}\left(  \frac{\mu+1-\left\{
R_{1}\right\}  }{k}\right)  +2\pi iB_{1}\left(  \frac{\mu+1-\left\{
R_{1}\right\}  }{k}\right)  B_{1}\left(  \frac{v+\left\{  R_{2}\right\}  }%
{k}\right)  .
\end{align*}
As in before, we have\textbf{\ }%
\begin{align*}
&  \sum\limits_{v=0}^{k-1}\sum\limits_{\mu=0}^{k-1}f(c(\mu+1+\left[
R_{1}\right]  ))f^{\ast}(-b\left(  \left[  R_{2}\right]  -v\right)
)B_{2}\left(  \frac{v+\left\{  R_{2}\right\}  }{k}\right)  =2B_{0}%
(A)P_{2}\left(  R_{2},B_{-b}\right)  ,\\
&  \sum\limits_{\mu=0}^{k-1}\sum\limits_{v=0}^{k-1}f^{\ast}(-b\left(  \left[
R_{2}\right]  -v\right)  )f(c(\mu+1+\left[  R_{1}\right]  ))B_{2}\left(
\frac{\mu+1-\left\{  R_{1}\right\}  }{k}\right)  =2B_{0}\left(  B\right)
P_{2}\left(  R_{1},A_{c}\right)
\end{align*}
and%
\begin{align*}
&  \sum\limits_{v=0}^{k-1}f^{\ast}(-b\left(  \left[  R_{2}\right]  -v\right)
)B_{1}\left(  \frac{v+\left\{  R_{2}\right\}  }{k}\right)  \sum\limits_{\mu
=0}^{k-1}f(c(\mu+1+\left[  R_{1}\right]  ))B_{1}\left(  \frac{\mu+1-\left\{
R_{1}\right\}  }{k}\right) \\
&  =P_{1}\left(  R_{2},B_{-b}\right)  P_{1}\left(  -R_{1},A_{-c}\right)
+f(cR_{1})\lambda_{R_{1}}.
\end{align*}
From (\ref{38}) we have%
\[
-2if(cR_{1})\lambda_{R_{1}}\lim_{s\rightarrow0}\Gamma(s)\sin(\pi
s)L(s;B_{-b};-R_{2})=-2\pi if(cR_{1})\lambda_{R_{1}}P_{1}\left(  R_{2}%
,B_{-b}\right)  .
\]
Thus, combining above in (\ref{49}) gives
\begin{align}
&  \lim_{s\rightarrow0}\Gamma(s)\left(  \frac{1}{z^{s}}G(-1/z,s;B_{-b}%
,A_{-c},R_{1},R_{2})-G(z,s;A_{-c},B_{b};R_{2},-R_{1})\right) \nonumber\\
&  =2\pi iP_{1}\left(  R_{2},B_{-b}\right)  P_{1}\left(  -R_{1},A_{-c}\right)
-\frac{2\pi i}{z}B_{0}(A)P_{2}\left(  R_{2},B_{-b}\right)  -2\pi
izB_{0}\left(  B\right)  P_{2}\left(  R_{1},A_{c}\right)  .\label{48}%
\end{align}
Taking into account that%
\begin{align*}
&  \lim_{s\rightarrow0}z^{s}\Gamma(s)\left(  \frac{1}{\left(  dz-c\right)
^{s}}G\left(  Tz,s;A,B;r_{1},r_{2}\right)  -G(z,s;A_{-c},B_{b};R_{2}%
,-R_{1})\right) \\
&  =\lim_{s\rightarrow0}z^{s}\Gamma(s)\left(  \frac{1}{\left(  dz-c\right)
^{s}}G(V\left(  -1/z\right)  ,s;A,B;r_{1},r_{2}) -\frac{1}{z^{s}}G\left(
-1/z,s;B_{-b},A_{-c};R_{1},R_{2}\right)  \right) \\
&  \quad+\lim_{s\rightarrow0}z^{s}\Gamma(s)\left(  \frac{1}{z^{s}}G\left(
-1/z,s;B_{-b},A_{-c};R_{1},R_{2}\right)  -G(z,s;A_{-c},B_{b};R_{2}%
,-R_{1})\right)  ,
\end{align*}
(\ref{45}), (\ref{46}) and (\ref{48}) entail the reciprocity formula by
setting $z=c/d.$
\end{proof}

\section{Some series relations}

In this section we illustrate some transformation formulas of $A\left(
z,s;A_{\alpha},B_{\beta}\right)  :=A\left(  z,s;A_{\alpha},B_{\beta
};0,0\right)  $ for the special values of $A=\left\{  f(n)\right\}  $ and
$B=\left\{  f^{\ast}(n)\right\}  $\ by using (\ref{11}) and (\ref{13-a}) or
(\ref{17}). As applications of these formulas we present several relations
between various infinite series. For the rest of this paper we assume that
$a\equiv d\equiv0\left(  \operatorname{mod}k\right)  $ with $ad-bc=1$\ and
$\chi_{0}$ will denote the principle character of modulus $k\geq2$, i.e.,
\[
\chi_{0}\left(  n\right)  =%
\begin{cases}
1, & \text{if }\left(  n,k\right)  =1\\
0, & \text{if }\left(  n,k\right)  >1.
\end{cases}
\]
We first write $\Gamma(s)G(z,s;A,B)$ and $\Gamma(s)G(z,s;B_{-b},A_{-c})$ in
terms of $A\left(  z,s;A_{\alpha},B_{\beta}\right)  .$ Put $L(s;A_{c}%
)=L(s;A_{\alpha};0).$ From (\ref{11}), for $\alpha=\beta=1$ and $r_{1}%
=r_{2}=0$,
\begin{align}
\Gamma(s)G(z,s;A,B) &  =\left(  -2\pi i/k\right)  ^{s}k\left(  A\left(
z,s;A,\widehat{B}_{-1}\right)  +e(s/2)A\left(  z,s;A_{-1},\widehat{B}\right)
\right) \nonumber\\
&  \quad+\Gamma(s)f(0)\left(  L(s;B)+e(s/2)L(s;B_{-1})\right) \label{s1}%
\end{align}
and for $\alpha=-c,$ $\beta=-b$ and $r_{1}=r_{2}=0$,%
\begin{align}
\Gamma(s)G(z,s;B_{-b},A_{-c}) &  =\left(  -2\pi i/k\right)  ^{s}k\left(
A\left(  z,s;B_{-b},\widehat{A}_{c^{-1}}\right)  +e(s/2)A\left(
z,s;B_{b},\widehat{A}_{-c^{-1}}\right)  \right) \nonumber\\
&  \quad+\Gamma(s)f^{\ast}(0)\left(  L(s;A_{-c})+e(s/2)L(s;A_{c})\right)
.\label{s2}%
\end{align}

\textbf{1. }Let $A=\chi_{1}=\left\{  \chi_{1}(n)\right\}  $ and $B=\chi
_{2}=\left\{  \chi_{2}\left(  n\right)  \right\}  ,$ where $\chi_{1}$ and
$\chi_{2}$ are Dirichlet characters of modulus $k$.\textbf{\ }It is clear from
(\ref{47}) that
\begin{equation}
\widehat{f}(n)=\dfrac{1}{k}\sum\limits_{v=0}^{k-1}\chi(v)e^{-2\pi
inv/k}=\dfrac{1}{k}G(-n,\chi),\label{56}%
\end{equation}
where $G(n,\chi)$ is the Gauss sum.\textbf{\ }Then
\[
\widehat{A}=\left\{  \chi_{1}\left(  -1\right)  G(n,\chi_{1})/k\right\}
=\chi_{1}\left(  -1\right)  G_{1}/k\text{ and }\widehat{B}=\left\{  \chi
_{2}\left(  -1\right)  G(n,\chi_{2})/k\right\}  =\chi_{2}\left(  -1\right)
G_{2}/k.
\]
Thus, we deduce from (\ref{17}) after using (\ref{s1}) and (\ref{s2}) and
simplifying that
\begin{align}
&  \left(  A\left(  Vz,0;\chi_{1},G_{2}\right)  \frac{{}}{{}}-\chi_{1}\left(
c\right)  \chi_{2}\left(  b\right)  A\left(  z,0;\chi_{2},G_{1}\right)
\right)  \left(  1+\chi_{1}\left(  -1\right)  \chi_{2}\left(  -1\right)
\right) \nonumber\\
&  =2\pi i\chi_{1}\left(  c\right)  \chi_{2}\left(  b\right)  s\left(
d,c;\chi_{2},\chi_{1}\right)  -\frac{\pi i}{c(cz+d)}B_{0}\left(  \chi
_{2}\right)  B_{2}\left(  \chi_{1}\right)  -2\pi i\frac{\chi_{2}\left(
b\right)  }{c}\left(  cz+d\right)  B_{0}\left(  \chi_{1}\right)  P_{2}\left(
0,\chi_{2}\right)  .\label{s3}%
\end{align}
Notice that $P_{m}\left(  x,\chi\right)  $ and $B_{m}\left(  \chi\right)
$\ correspond to the generalized Bernoulli functions and numbers denoted by
$\chi\left(  -1\right)  B_{m}\left(  x,\overline{\chi}\right)  $ and
$\chi\left(  -1\right)  B_{m}(\overline{\chi}),$ respectively, given in
\cite{3}. In the sequel we need the relations \textbf{\ }%
\begin{equation}
B_{2m+1}\left(  \chi\right)  =0\text{ for even }\chi,\text{ }B_{2m}\left(
\chi\right)  =0\text{ for odd }\chi,\text{ and }B_{0}(\chi)=0\text{ for }%
\chi\not =\chi_{0}.\label{58}%
\end{equation}

If $\chi_{1}$ and $\chi_{2}$ are non-principle Dirichlet characters with
$\chi_{1}\left(  -1\right)  \chi_{2}\left(  -1\right)  =1,$ then, by
(\ref{58}), (\ref{s3}) reduces to
\begin{equation}
A\left(  Vz,0;\chi_{1},G_{2}\right)  -\chi_{1}\left(  c\right)  \chi
_{2}\left(  b\right)  A\left(  z,0;\chi_{2},G_{1}\right)  =\pi i\chi
_{1}\left(  c\right)  \chi_{2}\left(  b\right)  s\left(  d,c;\chi_{2},\chi
_{1}\right)  .\nonumber
\end{equation}

For a principle character $\chi_{0}$ the Gauss sum reduces to the Ramanujan
sum
\[
c_{k}(n):=\sum\limits_{\substack{v=1 \\\left(  v,k\right)  =1}}^{k}e^{2\pi
inv/k}.
\]
\textbf{\ }Supposing $\chi_{1}\mathbf{=}\chi_{2}=\chi_{0},$ (\ref{s3}) takes
the form
\[
A\left(  Vz,0;\chi_{0},c_{k}\right)  -A\left(  z,0;\chi_{0},c_{k}\right)  =\pi
is\left(  d,c;\chi_{0},\chi_{0}\right)  -\frac{\pi i}{2ck}\left(  \frac
{1}{cz+d}+cz+d\right)  \phi\left(  k\right)  B_{2}\left(  \chi_{0}\right)  ,
\]
where we have used that $B_{0}\left(  \chi_{0}\right)  =\phi\left(  k\right)
/k$ by (\ref{10}). Here $\phi\left(  k\right)  $ stands for the Euler $phi$
function, i.e., the number of positive integers not exceeding $k$ which are
relatively prime to $k.$

Now we utilize (\ref{13-a}) instead of (\ref{17}), with the use of
(\ref{int1}) for $s=-2N,$ where $N$ is an integer. Using (\ref{s1}) and
(\ref{s2}) in (\ref{13-a}), with $Vz=-1/z,$ and assuming that $\chi_{1}\left(
-1\right)  \chi_{2}\left(  -1\right)  =1,$ we find that%
\begin{align}
&  z^{2N}A\left(  -1/z,-2N;\chi_{1},G_{2}\right)  -\chi_{1}\left(  -1\right)
A\left(  z,-2N;\chi_{2},G_{1}\right) \nonumber\\
&  =\frac{\left(  2\pi i\right)  ^{2N+1}}{2}\sum\limits_{m=0}^{2N+2}%
\sum\limits_{\mu=0}^{k-1}\sum\limits_{v=0}^{k-1}\frac{\chi_{1}(-v)\chi
_{2}(-\mu-1)}{m!\left(  2N+2-m\right)  !}B_{2N+2-m}\left(  \frac{v}{k}\right)
B_{m}\left(  \frac{\mu+1}{k}\right)  \left(  -z\right)  ^{m-1}\nonumber\\
&  =-\frac{\left(  2\pi i\right)  ^{2N+1}}{2k^{2N}}\sum\limits_{m=0}%
^{2N+2}\left(  -1\right)  ^{m}\frac{B_{m}\left(  \chi_{2}\right)  }{m!}%
\frac{B_{2N+2-m}\left(  \chi_{1}\right)  }{\left(  2N+2-m\right)  !}%
z^{m-1},\label{s4}%
\end{align}
which can be equally written as
\begin{align}
&  z^{2N}\sum\limits_{n=1}^{\infty}\frac{G\left(  n,\chi_{2}\right)  G\left(
-n/z,\chi_{1}\right)  }{1-e^{-2\pi in/z}}n^{-2N-1}-\chi_{1}\left(  -1\right)
\sum\limits_{n=1}^{\infty}\frac{G\left(  n,\chi_{1}\right)  G\left(
nz,\chi_{2}\right)  }{1-e^{2\pi inz}}n^{-2N-1}\nonumber\\
&  =-\frac{\left(  2\pi i\right)  ^{2N+1}}{2k^{2N}}\sum\limits_{m=0}%
^{2N+2}\left(  -1\right)  ^{m}\frac{B_{m}\left(  \chi_{2}\right)  }{m!}%
\frac{B_{2N+2-m}\left(  \chi_{1}\right)  }{\left(  2N+2-m\right)  !}%
z^{m-1}.\label{s4-1}%
\end{align}
Indeed, setting $m=v+hk,$ $0\leq v\leq k-1,$ $0\leq h<\infty,$ in the double
sum%
\[
A\left(  z,-2N;\chi_{1},G_{2}\right)  =\sum\limits_{m=1}^{\infty}%
\sum\limits_{n=1}^{\infty}\chi_{1}\left(  m\right)  G\left(  n,\chi
_{2}\right)  e^{2\pi inmz/k}n^{-2N-1}%
\]
we have%
\begin{align}
A\left(  z,-2N;\chi_{1},G_{2}\right)   &  =\sum\limits_{n=1}^{\infty}G\left(
n,\chi_{2}\right)  n^{-2N-1}\sum\limits_{v=0}^{k-1}\chi_{1}\left(  v\right)
e^{2\pi ivnz/k}\sum\limits_{h=0}^{\infty}e^{2\pi inzh}\nonumber\\
&  =\sum\limits_{n=1}^{\infty}\frac{G\left(  n,\chi_{2}\right)  G\left(
nz,\chi_{1}\right)  }{1-e^{2\pi inz}}n^{-2N-1}.\label{s4-a}%
\end{align}
Let $z=\pi i/k\gamma$ in (\ref{s4-1}), where $\gamma>0,$ and determine
$\theta>0$ by $\gamma\theta=\pi^{2}/k^{2}$. Then we see that
\begin{align}
&  \gamma^{-N}\sum\limits_{n=1}^{\infty}\frac{G\left(  n,\chi_{2}\right)
G\left(  ink\gamma/\pi,\chi_{1}\right)  }{1-e^{-2nk\gamma}}n^{-2N-1}%
\nonumber\\
&  \quad-\left(  -\theta\right)  ^{-N}\chi_{1}\left(  -1\right)
\sum\limits_{n=1}^{\infty}\frac{G\left(  n,\chi_{1}\right)  G\left(
ink\theta/\pi,\chi_{2}\right)  }{1-e^{-2nk\theta}}n^{-2N-1}\nonumber\\
&  =-k2^{2N}\sum\limits_{m=0}^{2N+2}\left(  -i\right)  ^{m}\frac{B_{m}\left(
\chi_{2}\right)  }{m!}\frac{B_{2N+2-m}\left(  \chi_{1}\right)  }{\left(
2N+2-m\right)  !}\gamma^{N+1-m/2}\theta^{m/2}.\label{s4-2}%
\end{align}
Setting $\chi_{1}=\chi_{2}=\chi,$ $\gamma=\theta=\pi/k$ in (\ref{s4-2})\ or
$z=i$ in (\ref{s4-1}) and assuming that $\chi\left(  -1\right)  \left(
-1\right)  ^{N}=-1$ give%
\begin{equation}
\sum\limits_{n=1}^{\infty}\frac{G\left(  n,\chi\right)  G\left(
in,\chi\right)  }{1-e^{-2\pi n}}n^{-2N-1}=-\frac{k\left(  2\pi/k\right)
^{2N+1}}{4}\sum\limits_{m=0}^{2N+2}\left(  -i\right)  ^{m}\frac{B_{m}\left(
\chi\right)  }{m!}\frac{B_{2N+2-m}\left(  \chi\right)  }{\left(
2N+2-m\right)  !}.\label{s5}%
\end{equation}
As a consequence of (\ref{s5}) we may write
\begin{align*}
\sum\limits_{n=1}^{\infty}\frac{G\left(  n,\chi\right)  G\left(
in,\chi\right)  }{1-e^{-2\pi n}}n^{-1}  &  =\frac{\pi i}{2}B_{1}\left(
\chi\right)  B_{1}\left(  \chi\right)  ,\text{ for odd }\chi,\\
\sum\limits_{n=1}^{\infty}\frac{G\left(  n,\chi\right)  G\left(
in,\chi\right)  }{1-e^{-2\pi n}}n  &  =-\frac{k^{2}}{8\pi}B_{0}\left(
\chi\right)  B_{0}\left(  \chi\right)  ,\text{ for even }\chi,\\
\sum\limits_{n=1}^{\infty}\frac{G\left(  n,\chi\right)  G\left(
in,\chi\right)  }{1-e^{-2\pi n}}n^{2M-1}  &  =0,\text{ for }\chi\left(
-1\right)  \left(  -1\right)  ^{M}=-1,\text{ }M\geq2.
\end{align*}

Now differentiate both sides of (\ref{s4}) with respect to $z$ and then put
$\chi_{1}=\chi_{2}=\chi$ and $z=i.$ Under the assumption $\chi\left(
-1\right)  \left(  -1\right)  ^{N}=1,$ we obtain%
\begin{align}
&  \sum\limits_{m=1}^{\infty}\sum\limits_{n=1}^{\infty}\chi\left(  m\right)
G\left(  n,\chi\right)  e^{-2\pi nm/k}n^{-2N-1}\left(  N+\frac{2\pi mn}%
{k}\right) \nonumber\\
&  =-\frac{k\left(  2\pi/k\right)  ^{2N+1}}{4}\sum\limits_{m=0}^{2N+2}\left(
-i\right)  ^{m}\left(  m-1\right)  \frac{B_{m}\left(  \chi_{2}\right)  }%
{m!}\frac{B_{2N+2-m}\left(  \chi_{1}\right)  }{\left(  2N+2-m\right)
!}.\label{s6}%
\end{align}

Now we examine (\ref{s4-2}) for $\chi_{1}=\chi_{2}=\chi.$

\begin{example}
Let, in (\ref{s4-2}), $\chi_{1}=\chi_{2}=\chi$ be the primitive character of
modulus $k=4$ defined by\
\begin{equation}
\chi\left(  n\right)  =\left\{
\begin{array}
[c]{cl}%
0, & n\text{ even,}\\
1, & n\equiv1\left(  \operatorname{mod}4\right)  ,\\
-1, & n\equiv3\left(  \operatorname{mod}4\right)  .
\end{array}
\right. \label{pk}%
\end{equation}
Since $B_{m}\left(  1-x\right)  =\left(  -1\right)  ^{m}B_{m}\left(  x\right)
$ and
\begin{equation}
B_{m}\left(  \frac{1}{4}\right)  =2^{-m}B_{m}\left(  \frac{1}{2}\right)
-m4^{-m}E_{m-1},m\geq1,\label{s6-ek}%
\end{equation}
where $E_{m}$ is the $m$--th Euler number \cite[p. 806]{ab}, we have, for odd
$m$, that%
\[
B_{m}\left(  \chi\right)  =4^{m-1}\sum\limits_{v=0}^{3}\chi\left(  -v\right)
B_{m}\left(  \frac{v}{4}\right)  =\frac{m}{2}E_{m-1}.
\]
Using $G\left(  ix,\chi\right)  =e^{-\pi x}\left(  e^{\pi x/2}-e^{-\pi
x/2}\right)  $ and $G\left(  n,\chi\right)  =\overline{\chi}\left(  n\right)
G\left(  1,\chi\right)  =2i\chi\left(  n\right)  ,$ and simplifying give, for
$\gamma\theta=\pi^{2}/16,$ that%
\begin{align}
&  \gamma^{-N}\sum\limits_{n=1}^{\infty}\chi\left(  n\right)  \frac
{\text{sech}\left(  2n\gamma\right)  }{n^{2N+1}}+\left(  -\theta\right)
^{-N}\sum\limits_{n=1}^{\infty}\chi\left(  n\right)  \frac{\text{sech}\left(
2n\theta\right)  }{n^{2N+1}}\nonumber\\
&  =2^{2N}\frac{\pi}{4}\sum\limits_{m=0}^{N}\left(  -1\right)  ^{m}%
\frac{E_{2m}}{\left(  2m\right)  !}\frac{E_{2N-2m}}{\left(  2N-2m\right)
!}\gamma^{N-m}\theta^{m},\label{s4-3}%
\end{align}
where we have used the fact that $G\left(  n,\chi\right)  =\overline{\chi
}\left(  n\right)  G\left(  1,\chi\right)  $ for primitive character $\chi$
\cite[p. 168]{1}.
\end{example}

We remark that in the case when $\chi$ is primitive (\ref{s4})--(\ref{s6})
have been given by Berndt \cite{7,8}. Formula (\ref{s4-3}) is also found in
Ramanujan's Notebooks; see Entry 21 (ii) on p. 276 of Berndt \cite{17}. Also
Entries 14, 15 and 25 (vii), (viii), (ix) in \cite{17} are special cases of
(\ref{s4-3}). Moreover, since formula (\ref{s4-1}) is a generalization of
Theorem 4.2 in \cite{8} from primitive characters to Dirichlet characters, the
results of Theorem 4.2 in \cite{8} are also consequences of (\ref{s4-1}).
Historical informations and literature of (\ref{s4-3}) and their results can
be found in \cite{8}.

\begin{example}
Let $\chi_{1}=\chi_{2}=\chi_{0}$ with modulus $k=4$ in (\ref{s4-2}). Then we
have
\[
G\left(  ix,\chi_{0}\right)  =e^{-\pi x}\left(  e^{\pi x/2}+e^{-\pi
x/2}\right)  ,\text{ \ }G\left(  n,\chi_{0}\right)  =2\left(  -1\right)
^{n}\cos\left(  \pi n/2\right)
\]
and by (\ref{s6-ek})
\[
B_{2m+1}\left(  \chi_{0}\right)  =0\text{ and }B_{2m}\left(  \chi_{0}\right)
=2^{2m-1}B_{2m}\left(  1/2\right)  ,\text{ }m\geq0\text{.}%
\]
Replacing $\gamma$ and $\theta$ by $\gamma/4$ and $\theta/4,$ respectively,
and simplifying give, for $\gamma\theta=\pi^{2},$ that%
\begin{align}
&  \gamma^{-N}\sum\limits_{n=1}^{\infty}\left(  -1\right)  ^{n}\frac
{\text{csch}\left(  n\gamma\right)  }{n^{2N+1}}-\left(  -\theta\right)
^{-N}\sum\limits_{n=1}^{\infty}\left(  -1\right)  ^{n}\frac{\text{csch}\left(
n\theta\right)  }{n^{2N+1}}\nonumber\\
&  =-2^{2N+1}\sum\limits_{m=0}^{N+1}\left(  -1\right)  ^{m}\frac{B_{2m}\left(
1/2\right)  }{\left(  2m\right)  !}\frac{B_{2N+2-2m}\left(  1/2\right)
}{\left(  2N+2-2m\right)  !}\gamma^{N+1-m}\theta^{m}.\label{s4-4}%
\end{align}
It follows for $N=2M+1$ that\textbf{\ }
\[
\sum\limits_{n=1}^{\infty}\left(  -1\right)  ^{n}\frac{\text{csch}\left(
n\pi\right)  }{n^{4M+3}}=-\left(  2\pi\right)  ^{4M+3}\sum\limits_{m=0}%
^{2M+2}\left(  -1\right)  ^{m}\frac{B_{2m}\left(  1/2\right)  }{\left(
2m\right)  !}\frac{B_{4M+4-2m}\left(  1/2\right)  }{\left(  4M+4-2m\right)
!},
\]
which was first proved by Cauchy as cited by Berndt \cite{22}.
\end{example}

(\ref{s4-4}) has been also established by Berndt \cite[Theorem 3.1]{22}.
Historical informations and literature of (\ref{s4-4}) and their results can
be found in \cite{22}.

\textbf{2. }Let $A=\left\{  f(n)\right\}  =\left\{  G(n,\chi_{1})\right\}
=G_{1} $ and $B=\left\{  f^{\ast}(n)\right\}  =\left\{  G(n,\chi_{2})\right\}
=G_{2},$ where $\chi_{1}$ and $\chi_{2}$ are Dirichlet characters of modulus
$k$. Then, $A_{\alpha}=\overline{\chi_{1}}\left(  \alpha\right)  G_{1}$ and
$B_{\alpha}=\overline{\chi_{2}}\left(  \alpha\right)  G_{2}$\ for $\left(
\alpha,k\right)  =1,$\ and from (\ref{47}) $\widehat{A}=\left\{  \chi
_{1}(n)\right\}  $ and $\widehat{B}=\left\{  \chi_{2}(n)\right\}  .$ So that
we find from (\ref{s1}), (\ref{s2}) and (\ref{17}) that
\begin{align}
&  k\left(  A\left(  Vz,0;G_{1},\chi_{2}\right)  \frac{{}}{{}}-\overline
{\chi_{1}}\left(  c\right)  \overline{\chi_{2}}\left(  b\right)  A\left(
z,0;G_{2},\chi_{1}\right)  \right)  \left(  \chi_{1}\left(  -1\right)
+\chi_{2}\left(  -1\right)  \right) \label{s7}\\
&  +\lim_{s\rightarrow0}\left(  \Gamma\left(  s\right)  \left(  cz+d\right)
^{-s}\left(  1+e\left(  -s/2\right)  \chi_{2}\left(  -1\right)  \right)
L\left(  s,G_{2}\right)  G(0,\chi_{1})\frac{{}}{{}}\right. \nonumber\\
&  \left.  \frac{{}}{{}}\qquad-\Gamma\left(  s\right)  \left(  1+e\left(
-s/2\right)  \chi_{1}\left(  -1\right)  \right)  L\left(  s,G_{1}\right)
G(0,\chi_{2})\overline{\chi_{1}}\left(  -c\right)  \right) \nonumber\\
&  =2\pi i\overline{\chi_{1}}\left(  c\right)  \overline{\chi_{2}}\left(
b\right)  s\left(  d,c;G_{2},G_{1}\right)  -\frac{\pi i}{c(cz+d)}B_{0}%
(G_{2})B_{2}(G_{1})-2\pi i\overline{\chi_{2}}\left(  b\right)  \frac{cz+d}%
{c}B_{0}(G_{1})P_{2}\left(  0,G_{2}\right)  .\nonumber
\end{align}
Suppose that $\chi_{1}\left(  -1\right)  \chi_{2}\left(  -1\right)
=1$\textbf{.} Thus (\ref{s7}) yields
\begin{align}
&  A\left(  Vz,0;G_{1},\chi_{2}\right)  -\overline{\chi_{1}}\left(  c\right)
\overline{\chi_{2}}\left(  b\right)  A\left(  z,0;G_{2},\chi_{1}\right)
\nonumber\\
&  \quad=\frac{\pi i}{k}\overline{\chi_{1}}\left(  -c\right)  \overline
{\chi_{2}}\left(  b\right)  s\left(  d,c;G_{2},G_{1}\right)  +\delta\left(
z;c,d;\chi_{1},\chi_{2}\right)  ,\label{s7-1}%
\end{align}
where
\begin{equation}
\delta\left(  z;c,d;\chi_{1},\chi_{2}\right)  =\left\{
\begin{array}
[c]{ll}%
\dfrac{c_{k}(0)}{k}P_{1}\left(  0,c_{k}\right)  \log\left(  cz+d\right)  , &
\text{if }\chi_{1}=\chi_{2}=\chi_{0},\\
0, & \text{if }\chi_{1}\not =\chi_{0}\text{ and }\chi_{2}\not =\chi
_{0}\mathbf{,}%
\end{array}
\right. \label{s7-2}%
\end{equation}
and we have used (\ref{L0}) and that $G(0,\chi)=0$ for $\chi\not =\chi_{0}$
and $B_{0}(G)=0.$

Note that the number $P_{1}\left(  0,G\right)  $ can be evaluated as%
\begin{align}
P_{1}\left(  0,G\right)   &  =\sum\limits_{v=0}^{k-1}G(-v,\chi)P_{1}\left(
\dfrac{v}{k}\right)  =\sum\limits_{j=1}^{k-1}\chi\left(  j\right)  \frac
{1}{e^{-2\pi ij/k}-1}\nonumber\\
&  =\frac{i}{2}\sum\limits_{j=1}^{k-1}\chi\left(  j\right)  \cot\left(  \pi
j/k\right)  -\frac{1}{2}\sum\limits_{j=1}^{k-1}\chi\left(  j\right)
.\label{54}%
\end{align}
In fact, the numbers $P_{r}\left(  0,G\right)  ,$ $r\geq1,$ are closely
related to the Dirichlet $L$--function $L\left(  r,\chi\right)  .$ Let $\chi$
be the Dirichlet character of modulus $k\geq2$ and put $\chi\left(  -1\right)
=\left(  -1\right)  ^{l}.$ Alkan \cite[Theorem 1]{4} shows that if $l$ and
$r\geq1$ have the same parity, then
\begin{equation}
2k\frac{\left(  -1\right)  ^{l+1}r!}{\left(  2\pi i\right)  ^{r}}L\left(
r,\chi\right)  =\sum\limits_{q=0}^{2\left[  \frac{r}{2}\right]  }\binom{r}%
{q}B_{q}S\left(  r-q,\chi\right)  ,\label{54-1}%
\end{equation}
where $S\left(  m,\chi\right)  =\sum\limits_{j=1}^{k}\left(  j/k\right)
^{m}G\left(  j,\chi\right)  $. Now, using the facts that
\[
\sum\limits_{q=0}^{r}\binom{r}{q}B_{q}x^{r-q}=B_{r}\left(  x\right)  \text{
and }B_{r}\left(  1-x\right)  =\left(  -1\right)  ^{r}B_{r}\left(  x\right)
\]
the right hand side of (\ref{54-1}) may be written
\[
\left(  -1\right)  ^{r}\sum\limits_{j=0}^{k-1}G(-j,\chi)B_{r}\left(  \dfrac
{j}{k}\right)  =\left(  -1\right)  ^{r}r!k^{1-r}P_{r}\left(  0,G\right)  ,
\]
which yields
\[
P_{r}\left(  0,G\right)  =-2\left(  \frac{k}{2\pi i}\right)  ^{r}L\left(
r,\chi\right)  \text{, if }l\text{ and }r\geq1\text{ have the same parity.}%
\]
Furthermore\textbf{\ }since $\cot x$ is an odd function, it is seen for even
Dirichlet character $\chi$ that\textbf{\ }$P_{1}\left(  0,G\right)
=0$\textbf{\ }if $\chi\not =\chi_{0},$ and $P_{1}\left(  0,G\right)
=P_{1}\left(  0,c_{k}\right)  =-\phi\left(  k\right)  /2$\ if $\chi=\chi_{0}$.
Therefore, for $Vz=-1/z,$ (\ref{s7-1}) may be written explicitly as
\begin{align*}
A\left(  -1/z,0;G_{1},\chi_{2}\right)  +A\left(  z,0;G_{2},\chi_{1}\right)
&  =-\frac{k}{\pi i}L\left(  1,\chi_{1}\right)  L\left(  1,\chi_{2}\right)
,\text{ for odd }\chi_{1}\text{ and }\chi_{2},\\
A\left(  -1/z,0;G_{1},\chi_{2}\right)  -A\left(  z,0;G_{2},\chi_{1}\right)
&  =0,\text{ for even }\chi_{1}\not =\chi_{0}\text{ and }\chi_{2}\not =%
\chi_{0},\\
A\left(  -1/z,0;c_{k},\chi_{0}\right)  -A\left(  z,0;c_{k},\chi_{0}\right)
&  =\frac{\phi\left(  k\right)  ^{2}}{2k}\left(  \frac{\pi i}{2}-\log
z\right)  ,\text{ for }\chi_{1}=\chi_{2}=\chi_{0}.
\end{align*}
The counterpart of (\ref{s4-a}) for $A\left(  z,0;G_{1},\chi_{2}\right)  $
is\textbf{\ }%
\[
A\left(  z,0;G_{1},\chi_{2}\right)  =\sum\limits_{v=1}^{k-1}\chi_{1}\left(
v\right)  \sum\limits_{n=1}^{\infty}\frac{\chi_{2}\left(  n\right)
}{1-e^{2\pi i\left(  v+nz\right)  /k}}n^{-1}.
\]
As before, let $z=\pi i/k\gamma$, where $\gamma>0,$ and determine $\theta>0$
by $\gamma\theta=\pi^{2}/k^{2}.$ Then, for $Vz=-1/z,$ (\ref{s7-1}) becomes
\begin{align}
&  \sum\limits_{n=1}^{\infty}\sum\limits_{v=0}^{k-1}\frac{\chi_{1}\left(
v\right)  \chi_{2}\left(  n\right)  }{1-e^{2\pi iv/k-2n\gamma}}n^{-1}-\chi
_{2}\left(  -1\right)  \sum\limits_{n=1}^{\infty}\sum\limits_{v=0}^{k-1}%
\frac{\chi_{2}\left(  v\right)  \chi_{1}\left(  n\right)  }{1-e^{2\pi
iv/k-2n\theta}}n^{-1}\nonumber\\
&  =\frac{\pi i}{k}\chi_{1}\left(  -1\right)  P_{1}\left(  0,G_{1}\right)
P_{1}\left(  0,G_{2}\right)  +\delta\left(  z;1,0;\chi_{1},\chi_{2}\right)
,\label{s9}%
\end{align}
where $\delta\left(  z;1,0;\chi_{1},\chi_{2}\right)  $ is given by (\ref{s7-2}).

\begin{example}
Let $\chi_{1}=\chi_{2}=\chi$ be the Dirichlet character of modulus $k=6$
defined by
\begin{equation}
\chi\left(  n\right)  =\left\{
\begin{array}
[c]{cl}%
1, & n\equiv1\left(  \operatorname{mod}6\right)  ,\\
-1, & n\equiv5\left(  \operatorname{mod}6\right)  ,\\
0, & \left(  n,6\right)  >1
\end{array}
\right. \label{s9-3}%
\end{equation}
in (\ref{s9}). A simple calculation gives
\[
\sum\limits_{n=1}^{\infty}\frac{\chi\left(  n\right)  n^{-1}}{2\cosh\left(
2n\gamma\right)  -1}+\sum\limits_{n=1}^{\infty}\frac{\chi\left(  n\right)
n^{-1}}{2\cosh\left(  2n\theta\right)  -1}=\frac{\pi}{2\sqrt{3}}\text{, for
}\gamma\theta=\pi^{2}/36
\]
and%
\[
\sum\limits_{n=1}^{\infty}\frac{\chi\left(  n\right)  n^{-1}}{2\cosh\left(
n\pi/3\right)  -1}=\frac{\pi}{4\sqrt{3}}\text{, for }\gamma=\theta=\pi/6.
\]

\end{example}

\begin{example}
Let $\chi_{1}=\chi_{2}=\chi$ be the primitive character of modulus $k=3$
defined by
\[
\chi\left(  n\right)  =\left\{
\begin{array}
[c]{cl}%
0, & n\equiv0\left(  \operatorname{mod}3\right)  ,\\
1, & n\equiv1\left(  \operatorname{mod}3\right)  ,\\
-1, & n\equiv2\left(  \operatorname{mod}3\right)
\end{array}
\right.
\]
(\ref{s9}). Then, for $\gamma\theta=\pi^{2}/9$
\[
\sum\limits_{n=1}^{\infty}\frac{\chi\left(  n\right)  n^{-1}}{2\cosh
2n\gamma+1}+\sum\limits_{n=1}^{\infty}\frac{\chi\left(  n\right)  n^{-1}%
}{2\cosh2n\theta+1}=\frac{\pi}{9\sqrt{3}}.
\]

\end{example}

\begin{example}
Let $\chi_{1}=\chi_{2}=\chi_{0}$ with modulus $k=4$ in (\ref{s9})$.$ Then we
have, for $\gamma\theta=\pi^{2},$ that%
\begin{equation}
\sum\limits_{n=0}^{\infty}\frac{1}{2n+1}\left(  \frac{1}{1+e^{-\left(
2n+1\right)  \gamma}}-\frac{1}{1+e^{-\left(  2n+1\right)  \theta}}\right)
=\frac{1}{8}\left(  \log\gamma-\log\theta\right)  ,\label{s9-1}%
\end{equation}
which is same with Corollary 4.3 in \cite{22}. Differentiating both sides of
(\ref{s9-1}) with respect to $\gamma$ gives
\begin{gather*}
\gamma\sum\limits_{n=0}^{\infty}\text{sech}^{2}\left(  \left(  2n+1\right)
\frac{\gamma}{2}\right)  +\theta\sum\limits_{n=0}^{\infty}\text{sech}%
^{2}\left(  \left(  2n+1\right)  \frac{\theta}{2}\right)  =1,\\
\sum\limits_{n=0}^{\infty}\text{sech}^{2}\left(  \left(  2n+1\right)
\frac{\pi}{2}\right)  =\frac{1}{2\pi},
\end{gather*}
i.e., Corollaries 4.4 and 4.5 in \cite{22}, respectively.
\end{example}

\textbf{3. }Let $A=\left\{  \left(  -1\right)  ^{n}\chi_{1}(n)\right\}  $ and
$B=\left\{  \left(  -1\right)  ^{n}\chi_{2}(n)\right\}  ,$ where $\chi_{1}$
and $\chi_{2}$ are Dirichlet characters of modulus $k.$ Assume that $k$ is
even and $\chi_{1}\left(  -1\right)  \chi_{2}\left(  -1\right)  =1.$ Then,
from (\ref{56}),
\begin{align*}
&  \widehat{A}=\frac{\chi_{1}\left(  -1\right)  }{k} \left\{  G(n+k/2,\chi
_{1})\right\}  =\frac{\chi_{1}\left(  -1\right)  }{k}G_{1}^{\ast},\\
&  \widehat{B}=\frac{\chi_{2}\left(  -1\right)  }{k}\left\{  G(n+k/2,\chi
_{2})\right\}  =\frac{\chi_{2}\left(  -1\right)  }{k}G_{2}^{\ast}.
\end{align*}
Since $ad-bc=1$ and $a\equiv d\equiv0\left(  \operatorname{mod}k\right)  ,$
$b$ and $c$ must be odd. It can be seen for $\left(  k,\alpha\right)  =1$ that
$G(\alpha n+k/2,\chi)=\overline{\chi}\left(  \alpha\right)  G(n+k/2,\chi),$
which implies $\widehat{A}_{\alpha}=\overline{\chi_{1}}\left(  -\alpha\right)
G_{1}^{\ast}/k$ and $\widehat{B}_{\alpha}=\overline{\chi_{2}}\left(
-\alpha\right)  G_{2}^{\ast}/k.$ Also
\begin{equation}
P_{m}\left(  x,A_{c}\right)  =\chi_{1}\left(  -c\right)  k^{m-1}%
\sum\limits_{v=0}^{k-1}\left(  -1\right)  ^{v}\chi_{1}\left(  v\right)
P_{m}\left(  \dfrac{v+x}{k}\right)  =\chi_{1}\left(  -c\right)  P_{m}^{\ast
}\left(  x,\overline{\chi_{1}}\right) \label{s8-3}%
\end{equation}
and
\begin{align}
s\left(  d,c;B_{b},A_{c}\right)   &  =\chi_{1}\left(  -c\right)  \chi
_{2}\left(  b\right)  \sum\limits_{n=1}^{ck}\left(  -1\right)  ^{n}\chi
_{2}\left(  n\right)  P_{1}\left(  \dfrac{n}{ck}\right)  P_{1}^{\ast}\left(
\frac{dn}{c},\overline{\chi_{1}}\right) \label{s8-4}\\
&  =\chi_{1}\left(  -c\right)  \chi_{2}\left(  b\right)  s^{\ast}\left(
d,c;\chi_{2},\chi_{1}\right)  ,\nonumber
\end{align}
where $P_{m}^{\ast}\left(  x,\overline{\chi_{1}}\right)  $ and $s^{\ast
}\left(  d,c;\chi_{2},\chi_{1}\right)  $ are the alternating Bernoulli
function and Dedekind character sum defined in \cite{9}. Finally put
\[
A_{1}\left(  z,s;\chi,G^{\ast}\right)  =\sum\limits_{m=1}^{\infty}%
\sum\limits_{n=1}^{\infty}\left(  -1\right)  ^{m}\chi\left(  m\right)
G\left(  n+\frac{k}{2},\chi\right)  e^{2\pi inmz/k}n^{s-1}.
\]
Now using (\ref{s1}) and (\ref{s2}) with the notations above we deduce from
(\ref{17}) that
\begin{align}
&  A_{1}\left(  Vz,0;\chi_{1},G_{2}^{\ast}\right)  -\chi_{1}\left(  c\right)
\chi_{2}\left(  b\right)  A_{1}\left(  z,0;\chi_{2},G_{1}^{\ast}\right)
\nonumber\\
&  =\pi i\chi_{1}\left(  -c\right)  \chi_{2}\left(  b\right)  s^{\ast}\left(
d,c;\chi_{2},\chi_{1}\right)  -\frac{\pi i}{c(cz+d)}B_{0}^{\ast}%
(\overline{\chi_{2}})B_{2}^{\ast}(\overline{\chi_{1}})\nonumber\\
&  \quad-\frac{2\pi i}{c}\left(  cz+d\right)  \chi_{2}\left(  -b\right)
B_{0}^{\ast}(\overline{\chi_{1}})P_{2}^{\ast}\left(  0,\overline{\chi_{2}%
}\right)  .\label{s8}%
\end{align}
Notice that when $\chi_{1}=\chi=\overline{\chi_{2}}$\ is primitive, (\ref{s8})
has been given by Meyer \cite[Theorem 3]{9}. Similar to (\ref{s4-a}), one has
\[
A_{1}\left(  z,s;\chi_{1},G_{2}^{\ast}\right)  =\sum\limits_{n=1}^{\infty
}\frac{G\left(  n+\frac{k}{2},\chi_{2}\right)  G\left(  nz+\frac{k}{2}%
,\chi_{1}\right)  }{1-e^{2\pi inz}}n^{s-1}.
\]
Set $z=\pi i/k\gamma$, where $\gamma>0,$ and determine $\theta>0$ by
$\gamma\theta=\pi^{2}/k^{2}.$ Then, for $Vz=-1/z,$ (\ref{s8}) becomes
\begin{align}
&  \sum\limits_{n=1}^{\infty}\frac{G\left(  n+\frac{k}{2},\chi_{2}\right)
G\left(  \frac{in\gamma k}{\pi}+\frac{k}{2},\chi_{1}\right)  }{n\left(
1-e^{-2nk\gamma}\right)  }-\chi_{2}\left(  -1\right)  \sum\limits_{n=1}%
^{\infty}\frac{G\left(  n+\frac{k}{2},\chi_{1}\right)  G\left(  \frac{in\theta
k}{\pi}+\frac{k}{2},\chi_{2}\right)  }{n\left(  1-e^{-2nk\theta}\right)
}\nonumber\\
&  =\pi iP_{1}^{\ast}(0,\overline{\chi_{2}})P_{1}^{\ast}(0,\overline{\chi_{1}%
})-k\gamma B_{0}^{\ast}(\overline{\chi_{2}})B_{2}^{\ast}(\overline{\chi_{1}%
})+\theta k\chi_{2}\left(  -1\right)  B_{0}^{\ast}(\overline{\chi_{1}}%
)B_{2}^{\ast}\left(  \overline{\chi_{2}}\right)  .\label{s8-1}%
\end{align}

\begin{example}
\label{ex1}Let, in (\ref{s8-1}), $\chi_{1}=\chi_{2}=\chi$ be the primitive
character of modulus $k=4$ given by (\ref{pk}). From \cite[Proposition 1]{9}
\[
\sum\limits_{v=0}^{k-1}\left(  -1\right)  ^{v}\chi_{1}(v)=0\text{ for even
}k\text{ and }\chi\not =\chi_{0}%
\]
we have $B_{0}^{\ast}(\overline{\chi})=0.$ Using $G\left(  nz+k/2,\chi\right)
=e^{\pi inz}\left(  e^{\pi inz/2}-e^{-\pi inz/2}\right)  $ and $G\left(
n+k/2,\chi\right)  =-2i\chi\left(  n\right)  $, and simplifying we deduce
that, for $\gamma\theta=\pi^{2}$\textbf{\ }%
\begin{equation}
\sum\limits_{n=1}^{\infty}\frac{\chi\left(  n\right)  }{n\left(  e^{n\gamma
/2}+e^{-n\gamma/2}\right)  }+\sum\limits_{n=1}^{\infty}\frac{\chi\left(
n\right)  }{n\left(  e^{n\theta/2}+e^{-n\theta/2}\right)  }=\frac{\pi}%
{8},\nonumber
\end{equation}
which is special case of (\ref{s4-3}).
\end{example}

\begin{example}
Let $\chi_{1}=\chi_{2}=\chi_{0}$ with modulus $k=8$ in (\ref{s8-1})$.$ Some
arguments give $P_{1}^{\ast}(0,\chi_{0})=0,$ $B_{0}^{\ast}(\chi_{0})=-4,$
$B_{2}^{\ast}(\chi_{0})=-2\left(  B_{2}(1/8)+B_{2}(3/8)\right)  $ and
\[
G\left(  nz+k/2,\chi\right)  =-e^{2\pi inz}\left(  e^{\pi inz/4}+e^{-\pi
inz/4}\right)  \left(  1+e^{\pi inz}\right)  .
\]
Hence, with some simplifications, for $\gamma\theta=\pi^{2}/4,$%
\[
\sum\limits_{n=1}^{\infty}\frac{\left(  -1\right)  ^{n}e^{-12n\gamma}}%
{n\sinh\left(  2n\gamma\right)  }-\sum\limits_{n=1}^{\infty}\frac{\left(
-1\right)  ^{n}e^{-12n\theta}}{n\sinh\left(  2n\theta\right)  }=32\left(
\theta-\gamma\right)  \left(  \frac{11}{192}-\frac{13}{192}\right)  =\frac
{1}{3}\left(  \gamma-\theta\right)  .
\]

\end{example}

\begin{example}
Let, in (\ref{s8-1}), $\chi_{1}=\chi_{2}=\chi$ be the Dirichlet character of
modulus $k=6$ given by (\ref{s9-3}). Using $B_{0}^{\ast}(\overline{\chi})=0$
and $G\left(  nz+k/2,\chi\right)  =e^{\pi inz}\left(  e^{2\pi inz/3}-e^{-2\pi
inz/3}\right)  $, and simplifying we deduce, for $\gamma\theta=\pi^{2}/9,$
that%
\[
\sum\limits_{n=1}^{\infty}\left(  -1\right)  ^{n}\frac{\sin\left(  2\pi
n/3\right)  \cosh\left(  n\gamma\right)  }{n\left(  2\cosh\left(
2n\gamma\right)  +1\right)  }+\sum\limits_{n=1}^{\infty}\left(  -1\right)
^{n}\frac{\sin\left(  2\pi n/3\right)  \cosh\left(  n\theta\right)  }{n\left(
2\cosh\left(  2n\theta\right)  +1\right)  }=\frac{\pi}{9}.
\]

\end{example}

\section{Some examples}

In this final section, we deal with Theorem \ref{rep1} for the special values
of $A=\left\{  f(n)\right\}  $ and $B=\left\{  f^{\ast}(n)\right\}  .$
$\chi_{1}$ and $\chi_{2}$\ will denote Dirichlet characters of modulus $k,$
and $\chi_{0}$\ still stands for principle character.

\textbf{1.} When $A=B=I,$
\[
s\left(  d,c;I,I\right)  =\sum\limits_{n=1}^{c}P_{1}\left(  \frac{n}%
{c}\right)  P_{1}\left(  \frac{dn}{c}\right)  =\sum\limits_{n=1}^{c-1}\left(
\left(  \frac{n}{c}\right)  \right)  \left(  \left(  \frac{dn}{c}\right)
\right)  +\frac{1}{4}=s\left(  d,c\right)  +\frac{1}{4}%
\]
and from (\ref{51}) we have $s\left(  -c,d;I,I\right)  =-s\left(  c,d\right)
+1/4.$ So, Theorem \ref{rep1} reduces to the reciprocity formula for the
classical Dedekind sums given by (\ref{35}).

\textbf{2.} Let $A=\chi_{1}=\left\{  \chi_{1}(n)\right\}  $ and $B=\chi
_{2}=\left\{  \chi_{2}(n)\right\}  .$ Then, using that $s\left(
c,d;A_{c},B_{b}\right)  =\chi_{1}\left(  c\right)  \chi_{2}\left(  b\right)
s\left(  c,d;\chi_{1},\chi_{2}\right)  $\ and $P_{m}\left(  0,A_{c}\right)
=\chi\left(  c\right)  P_{m}\left(  0,\chi_{1}\right)  =\chi\left(  c\right)
B_{m}\left(  \chi_{1}\right)  /m!$, we have
\begin{align}
&  s\left(  c,d;\chi_{1},\chi_{2}\right)  +s\left(  d,c;\chi_{2},\chi
_{1}\right) \nonumber\\
&  =-B_{1}(\chi_{1})B_{1}(\chi_{2})+\left(  \frac{\overline{\chi_{1}}\left(
c\right)  }{c}+c\right)  \frac{\overline{\chi_{2}}\left(  b\right)  }{2d}%
B_{0}\left(  \chi_{2}\right)  B_{2}(\chi_{1})+\frac{d}{2c}\overline{\chi_{1}%
}\left(  c\right)  B_{0}(\chi_{1})B_{2}(\chi_{2}),\label{rp1}%
\end{align}
where we assume that $\chi_{1}\left(  -1\right)  \chi_{2}\left(  -1\right)
=1,$ otherwise $s\left(  c,d;\chi_{1},\chi_{2}\right)  =0$.\ This can be
simplified by using (\ref{58}).

In the case when $\chi_{1}$ an $\chi_{2}$ are primitive, $s\left(
d,c;\chi_{1},\chi_{2}\right)  $ arises in the transformation formula derived
by Berndt \cite{7} and is generalized in \cite{18}. The sum $s\left(
d,c;\chi,\overline{\chi}\right)  $ is first defined in \cite{2} and
\cite[Section 7]{15} according to $\chi$ is primitive or non-principle
Dirichlet character.

\textbf{3.} Let $A=\chi_{1}=\left\{  \chi_{1}(n)\right\}  $ and $B=G_{2}%
=\left\{  G(n,\chi_{2})\right\}  .$ Using that $B_{0}\left(  B\right)
=B_{0}\left(  G_{2}\right)  =0,$ $P_{m}\left(  x,B_{b}\right)  =\overline
{\chi_{2}}\left(  b\right)  P_{m}\left(  x,G_{2}\right)  $ and $s\left(
d,c;B_{b},A_{c}\right)  =\chi_{1}\left(  c\right)  \overline{\chi_{2}}\left(
b\right)  s\left(  d,c;G_{2},\chi_{1}\right)  ,$\textbf{\ }Theorem \ref{rep1}
becomes\textbf{\ }%
\[
s\left(  c,d;\chi_{1},G_{2}\right)  +s\left(  d,c;G_{2},\chi_{1}\right)
=-\chi_{1}\left(  -1\right)  \chi_{2}\left(  -1\right)  P_{1}\left(
0,\chi_{1}\right)  P_{1}\left(  0,G_{2}\right)
\]
when $\chi_{1}$ and $\chi_{2}$ are non-principle, and%
\[
s\left(  c,d;\chi_{0},c_{k}\right)  +s\left(  d,c;c_{k},\chi_{0}\right)
=\frac{d}{c}B_{0}(\chi_{0})P_{2}\left(  0,c_{k}\right)
\]
when $\chi_{1}=\chi_{2}=\chi_{0}$\textbf{.}

Note that for $k=2,$ provided that $d$ is even with $\left(  d,c\right)  =1,$
the sums $s\left(  c,d;\chi_{0},c_{2}\right)  $ and $s\left(  d,c;c_{2}%
,\chi_{0}\right)  $ can be expressed in terms of the sums $s\left(
c,d\right)  $ and $s_{2}\left(  d,c\right)  $ as%
\begin{align}
s\left(  c,d;\chi_{0},c_{2}\right)   &  =2s\left(  c,2d\right)  -3s\left(
c,d\right)  +s\left(  2c,d\right)  ,\nonumber\\
s\left(  d,c;c_{2},\chi_{0}\right)   &  =s_{2}\left(  2d,2c\right)
-s_{2}\left(  d,2c\right)  ,\label{57}%
\end{align}
where $s_{2}\left(  d,c\right)  $ is one of the Hardy--Berndt sums defined by
\cite{22,24}%
\[
s_{2}\left(  d,c\right)  =\sum\limits_{n=1}^{c-1}\left(  -1\right)  ^{n}%
P_{1}\left(  \dfrac{n}{c}\right)  P_{1}\left(  \dfrac{dn}{c}\right)  .
\]
These expressions follow from the fact $P_{1}\left(  x+1/2\right)
=P_{1}\left(  2x\right)  -P_{1}\left(  x\right)  .$

\begin{remark}
If we set $A=G_{2}=\left\{  G(n,\chi_{2})\right\}  $ and $B=\chi_{1}=\left\{
\chi_{1}(n)\right\}  $, then with the use of $B_{0}\left(  A\right)
=B_{0}\left(  G_{2}\right)  =0,$ $P_{m}\left(  x,A_{c}\right)  =\overline
{\chi_{2}}\left(  c\right)  P_{m}\left(  x,G_{2}\right)  $ and $s\left(
d,c;B_{b},A_{c}\right)  =\chi_{1}\left(  b\right)  \overline{\chi_{2}}\left(
c\right)  s\left(  d,c;\chi_{1},G_{2}\right)  ,$ Theorem \ref{rep1} would
become\textbf{\ }%
\[
s\left(  c,d;G_{2},\chi_{1}\right)  +s\left(  d,c;\chi_{1},G_{2}\right)
=-\chi_{1}\left(  -1\right)  \chi_{2}\left(  -1\right)  P_{1}\left(
0,\chi_{1}\right)  P_{1}\left(  0,G_{2}\right)
\]
when $\chi_{1}$ and $\chi_{2}$ are non-principle, and
\[
s\left(  c,d;c_{k},\chi_{0}\right)  +s\left(  d,c;\chi_{0},c_{k}\right)
=\left(  \frac{1}{c}+c\right)  \frac{1}{d}B_{0}(\chi_{0})P_{2}\left(
0,c_{k}\right)
\]
when $\chi_{1}=\chi_{2}=\chi_{0}$\textbf{.}
\end{remark}

\textbf{4. }Let $A=\left\{  G(n,\chi_{1})\right\}  =G_{1}$ and $B=\left\{
G(n,\chi_{2})\right\}  =G_{2}.$ One has $A_{c}=\left\{  G(cn,\chi
_{1})\right\}  =\left\{  \overline{\chi_{1}}\left(  c\right)  G(n,\chi
_{1})\right\}  =\overline{\chi_{1}}\left(  c\right)  G_{1}$ and $B_{c}%
=\overline{\chi_{2}}\left(  c\right)  G_{2}$ for $\left(  c,k\right)  =1.$
Thus, we have $B_{0}\left(  A\right)  =B_{0}\left(  B\right)  =0,$
$P_{1}\left(  0,A_{-c}\right)  =\overline{\chi_{1}}\left(  -c\right)
P_{1}\left(  0,G_{1}\right)  $ and $s\left(  d,c;B_{b},A_{c}\right)
=\overline{\chi_{1}}\left(  c\right)  \overline{\chi_{2}}\left(  b\right)
s\left(  d,c;G_{2},G_{1}\right)  $. Then, for $\chi_{1}=\chi_{2}=\chi_{0},$
\[
s\left(  c,d;c_{k},c_{k}\right)  +s\left(  d,c;c_{k},c_{k}\right)  =\frac
{1}{4}\phi\left(  k\right)  \phi\left(  k\right)
\]
and for $\chi_{1}\not =\chi_{0}$ and $\chi_{2}\not =\chi_{0},$
\[
s\left(  c,d;G_{1},G_{2}\right)  +s\left(  d,c;G_{2},G_{1}\right)
=-P_{1}\left(  0,G_{1}\right)  P_{1}\left(  0,G_{2}\right)  .
\]

In particular, for $k=2$, provided that $d$ is even with $\left(  d,c\right)
=1,$%
\[
s\left(  d,c;c_{2},c_{2}\right)  =2s_{2}\left(  d,2c\right)  -s_{2}\left(
2d,2c\right)  +\frac{1}{4}.
\]

\textbf{5. }$A=\left\{  \left(  -1\right)  ^{n}\chi_{1}(n)\right\}  $ and
$B=\left\{  \left(  -1\right)  ^{n}\chi_{2}(n)\right\}  .$ If $k$ is even,
then the sequences $A$ and $B$ have period $h=k.$ If $k$ is odd, then $A$ and
$B$ have period $h=2k.$ In both cases $b$ and $c$ must be odd since $ad-bc=1$
and $a\equiv d\equiv0\left(  \operatorname{mod}h\right)  .$ Using (\ref{s8-3})
and (\ref{s8-4}) in Theorem \ref{rep1} gives
\begin{align*}
&  s^{\ast}\left(  c,d;\chi_{1},\chi_{2}\right)  +\chi_{1}\left(  -1\right)
\chi_{2}\left(  -1\right)  s^{\ast}\left(  d,c;\chi_{2},\chi_{1}\right) \\
&  =-\chi_{2}\left(  -1\right)  P_{1}^{\ast}\left(  0,\overline{\chi_{2}%
}\right)  P_{1}^{\ast}\left(  0,\overline{\chi_{1}}\right)  +\frac{1}%
{2dc}\overline{\chi_{1}}\left(  -c\right)  \overline{\chi_{2}}\left(
-b\right)  B_{0}^{\ast}(\overline{\chi_{2}})B_{2}^{\ast}(\overline{\chi_{1}%
})\\
&  \quad+\frac{d}{c}\overline{\chi_{1}}\left(  c\right)  B_{0}^{\ast
}(\overline{\chi_{1}})P_{2}^{\ast}\left(  0,\overline{\chi_{2}}\right)
+\frac{c}{d}\chi_{1}\left(  -1\right)  \overline{\chi_{2}}\left(  -b\right)
B_{0}^{\ast}(\overline{\chi_{2}})P_{2}^{\ast}\left(  0,\overline{\chi_{1}%
}\right)  .
\end{align*}
For non-principle characters $\chi_{1}$ and $\chi_{2},$ this reduces to
\begin{equation}
s^{\ast}\left(  c,d;\chi_{1},\chi_{2}\right)  +\chi_{1}\left(  -1\right)
\chi_{2}\left(  -1\right)  s^{\ast}\left(  d,c;\chi_{2},\chi_{1}\right)
=-\chi_{2}\left(  -1\right)  P_{1}^{\ast}\left(  0,\overline{\chi_{2}}\right)
P_{1}^{\ast}\left(  0,\overline{\chi_{1}}\right) \label{59}%
\end{equation}
since $B_{0}^{\ast}(\overline{\chi_{1}})=B_{0}^{\ast}(\overline{\chi_{2}})=0.
$ Remark that (\ref{59}) has been obtained by Meyer \cite[Theorem 4]{9} when
$\overline{\chi_{1}}=\chi_{2}=\chi$ is primitive character of modulus $h=k$
($k$ even).

\textbf{6.} Setting $A=\left\{  e^{2\pi in/k}\right\}  $ and $B=\widehat{A}%
=\left\{  \widehat{f}(n)\right\}  .$ In view of (\ref{47}), we have
\[
\widehat{f}(n)=\frac{1}{k}\sum\limits_{j=0}^{k-1}e^{2\pi i\left(  1-n\right)
j/k}=%
\begin{cases}
1, & \text{if }n\equiv1\left(  \operatorname{mod}k\right) \\
0, & \text{if }n\not \equiv 1\left(  \operatorname{mod}k\right)  \text{.}%
\end{cases}
\]
Then using that $B_{0}\left(  A\right)  =1,$ $B_{0}\left(  \widehat{A}\right)
=0,$ $P_{1}\left(  0,\widehat{A}_{-b}\right)  =-P_{1}\left(  c/k\right)  ,$
$P_{2}\left(  0,\widehat{A}_{b}\right)  =P_{2}\left(  c/k\right)  ,$
$P_{1}\left(  0,A_{-c}\right)  =-1/2-\left(  i/2\right)  \cot\left(  \pi
c/k\right)  $\ and%
\[
P_{1}\left(  \dfrac{cn}{d},\widehat{A}_{b}\right)  =\sum\limits_{v=0}%
^{k-1}\widehat{f}(-bv)P_{1}\left(  \dfrac{v+cn/d}{k}\right)  =P_{1}\left(
\dfrac{c+cn/d}{k}\right)  ,
\]
reciprocity formula becomes%
\[
s\left(  c,d;A_{c},\widehat{A}_{b}\right)  +s\left(  d,c;\widehat{A}_{b}%
,A_{c}\right)  =-P_{1}\left(  c/k\right)  \left(  \frac{1}{2}+\frac{i}{2}%
\cot\frac{\pi c}{k}\right)  +\frac{d}{c}P_{2}\left(  \dfrac{c}{k}\right)  .
\]
In this case the periodic Dedekind sums take form
\begin{align*}
s\left(  c,d;A_{c},\widehat{A}_{b}\right)   &  =\sum\limits_{n=1}^{dk}e^{2\pi
icn/k}P_{1}\left(  \dfrac{n}{dk}\right)  P_{1}\left(  \dfrac{c\left(
d+n\right)  }{dk}\right)  ,\\
s\left(  d,c;\widehat{A}_{b},A_{c}\right)   &  =\sum\limits_{\mu=0}^{c-1}%
P_{1}\left(  \dfrac{\mu}{c}-\frac{1}{k}\right)  P_{1}\left(  \dfrac{dk\mu}%
{c},A_{c}\right)
\end{align*}
since $d\equiv0\left(  \operatorname{mod}k\right)  $.

If we take $k=2,$ provided that $c$ is odd, then we have\textbf{\ }%
\begin{equation}
s\left(  c,d;A_{c};\widehat{A}_{b}\right)  +s\left(  d,c;\widehat{A}_{b}%
;A_{c}\right)  =-\frac{d}{24c},\label{52}%
\end{equation}
where we have used that $P_{1}\left(  c/2\right)  =P_{1}\left(  1/2\right)
=0$ and $2P_{2}\left(  c/2\right)  =2P_{2}\left(  1/2\right)  =B_{2}\left(
1/2\right)  =\left(  1/2\right)  ^{2}-1/2+1/6=-1/12.$

\begin{remark}
Using $P_{1}\left(  x+\frac{1}{2}\right)  =P_{1}\left(  2x\right)
-P_{1}\left(  x\right)  $, we have
\[
s\left(  c,d;A_{c};\widehat{A}_{b}\right)  =s_{2}\left(  2c,2d\right)
-s_{2}\left(  c,2d\right)
\]
and from \cite[Eq.(3.4)]{23}%
\begin{align*}
s\left(  d,c;\widehat{A}_{b};A_{c}\right)   &  =2\sum\limits_{\mu=1}%
^{c-1}P_{1}\left(  \dfrac{\mu}{c}+\frac{1}{2}\right)  P_{1}\left(  \dfrac
{d\mu}{c}\right)  -\sum\limits_{\mu=1}^{c-1}P_{1}\left(  \dfrac{\mu}{c}%
+\frac{1}{2}\right)  P_{1}\left(  \dfrac{2d\mu}{c}\right) \\
&  =2s\left(  d,c\right)  +s_{3}\left(  \frac{d}{2},c\right)  -s\left(
2d,c\right)  -\frac{1}{2}s_{3}\left(  d,c\right)  ,
\end{align*}
where $s_{3}\left(  d,c\right)  $ is one of the Hardy--Berndt sums defined by
\cite{22,24}%
\[
s_{3}\left(  d,c\right)  =\sum\limits_{n=1}^{c-1}\left(  -1\right)  ^{n}%
P_{1}\left(  \frac{dn}{c}\right)  .
\]
Then (\ref{52}) becomes%
\[
2s\left(  d,c\right)  -s\left(  2d,c\right)  +s_{2}\left(  2c,2d\right)
-s_{2}\left(  c,2d\right)  +s_{3}\left(  d/2,c\right)  -\frac{1}{2}%
s_{3}\left(  d,c\right)  =-\frac{d}{24c}.
\]

\end{remark}

\end{document}